\documentclass[a4paper, 12pt]{amsproc}
\allowdisplaybreaks[1]

\usepackage{amsmath, amsthm, amssymb, mathtools, mathrsfs, stmaryrd}
\usepackage{ascmac}
\usepackage{comment}
\usepackage{bm}
\usepackage{ytableau}

\usepackage{hyperref}

\usepackage{tikz}

\usepackage{graphicx}
\usepackage{here}
\usepackage{time}
\usepackage[abbrev]{amsrefs}

\usepackage{xcolor}
\usepackage[capitalize,nameinlink,noabbrev,nosort]{cleveref}
\hypersetup{
 colorlinks=true,       
 linkcolor=blue,          
 citecolor=blue,        
 filecolor=blue,      
 urlcolor=blue,           
}

\usepackage{fullpage}

\makeatletter
\@namedef{subjclassname@2020}{%
  \textup{2020} Mathematics Subject Classification}
\makeatother

\newtheorem{theoremcounter}{Theorem Counter}[section]

\theoremstyle{definition}
\newtheorem{definition}[theoremcounter]{Definition}
\newtheorem{remark}[theoremcounter]{Remark}
\newtheorem*{comparisonremark}{Remark}
\newtheorem{example}[theoremcounter]{Example}

\theoremstyle{plain}
\newtheorem{lemma}[theoremcounter]{Lemma}

\newtheorem{corollary}[theoremcounter]{Corollary}

\newtheorem{theorem}[theoremcounter]{Theorem}

\numberwithin{equation}{section}

\newcommand{\Res}{\mathrm{Res}}

\DeclareMathOperator{\ReNew}{Re}
\renewcommand{\Re}{\ReNew}

\begin{document}
\author{Takashi Miyagawa}
\address[Takashi Miyagawa]{Onomichi City University,  1600-2 Hisayamada-cho, Onomichi, Hiroshima, 722-8506, Japan} 
\email{miyagawa@onomichi-u.ac.jp}


\subjclass[2020]{Primary 11M32, Secondary 11M35}

\begin{abstract}
We study the Laurent coefficients of the Barnes multiple zeta function
with complex parameters in a common open half-plane and a fixed
holomorphic determination of the logarithm.
At the highest pole, we derive explicit limit formulae for every
regular Laurent coefficient in terms of finite multiple sums and
logarithmic correction terms.
At the lower possible poles, we establish all-order relations with
the Taylor coefficients at the origin and their parameter derivatives.
We also determine the large-order behavior by subtracting all principal
parts. The remaining function is entire, so Cauchy's estimate separates
explicit residue contributions from a remainder that decays faster
than any fixed geometric rate.
The neighboring residues yield the limiting even and odd subsequences,
with vanishing residues and cancellations accounted for.
The constant-term cases recover known finite-part representations;
the main focus is their extension to higher Laurent coefficients.
\end{abstract}

\keywords{Barnes multiple zeta function, Laurent series expansion, polygamma function}

\title{Laurent series expansions for the Barnes multiple zeta function}

\maketitle

\section{Introduction}

Let $r$ be a positive integer and let $s=\sigma+it$
($\sigma,t\in\mathbb{R}$)
be a complex variable.
For a fixed real number $-\pi<\phi<\pi$, let
\[
\mathcal{H}_{\phi}
=
\left\{
\rho e^{i\theta}\in\mathbb{C}
\;\middle|\;
\rho>0,\ 
\phi-\frac{\pi}{2}<\theta<\phi+\frac{\pi}{2}
\right\}.
\]
This is an open half-plane bounded by a line through the origin.
Throughout the paper, the Barnes parameters are assumed to satisfy
$a,w_1,\ldots,w_r\in\mathcal{H}_{\phi}$.
We keep $\phi$ fixed when varying the parameters locally.
We use a fixed, but otherwise arbitrary, holomorphic logarithm $\log$
on $\mathcal{H}_{\phi}$ and set $z^s=\exp(s\log z)$ for
$z\in\mathcal{H}_{\phi}$.
All complex powers, logarithms, Laurent coefficients, and parameter
derivatives below use this determination, except where the classical
Hurwitz comparison explicitly specifies the real logarithm.
The defining angle condition implies $\Re(e^{-i\phi}z)>0$
for $z\in\mathcal{H}_{\phi}$.
For $x_1,\ldots,x_r\geq0$, put
\[
\alpha=\Re\left(e^{-i\phi}a\right)>0,
\qquad
\beta_j=\Re\left(e^{-i\phi}w_j\right)>0.
\]
Then
\[
\begin{aligned}
\left|a+\sum_{j=1}^{r}w_jx_j\right|
&\geq
\Re\left(e^{-i\phi}\left(a+\sum_{j=1}^{r}w_jx_j\right)\right)
=\alpha+\sum_{j=1}^{r}\beta_jx_j.
\end{aligned}
\]
In particular, $a+\sum_{j=1}^{r}w_jx_j\in\mathcal{H}_{\phi}$.
Taking $C=\min\{\alpha,\beta_1,\ldots,\beta_r\}>0$, we obtain
\[
\left|a+\sum_{j=1}^{r}w_jx_j\right|
\geq\alpha+\sum_{j=1}^{r}\beta_jx_j
\geq C\left(1+\sum_{j=1}^{r}x_j\right).
\]
We shall use this lower bound throughout; $C$ may be chosen uniformly
on compact subsets of $\mathcal{H}_{\phi}^{r+1}$.

Under this common half-plane condition, the Barnes multiple zeta function,
introduced by Barnes \cites{Barnes1899,Barnes1901,Barnes1904},
is defined by
\begin{align}\label{zeta_r}
\zeta_r(s,a;\boldsymbol{w})
&=
\sum_{m_1=0}^{\infty}\cdots
\sum_{m_r=0}^{\infty}
\frac{1}
     {(a+m_1w_1+\cdots+m_rw_r)^s}\\
&=
\sum_{m_1=0}^{\infty}\cdots
\sum_{m_r=0}^{\infty}
\frac{1}
     {(a+\boldsymbol{m}\cdot\boldsymbol{w})^s} \nonumber
\end{align}
where
$\boldsymbol{m}=(m_1,\ldots,m_r), \boldsymbol{w}=(w_1,\ldots,w_r)$.

This function is a natural generalization of the Hurwitz zeta function
\begin{align}\label{zeta_H_intro}
\zeta_H(s,a)
=
\sum_{m=0}^{\infty}
\frac{1}{(m+a)^s}
\quad (\sigma>1).
\end{align}
Here the Hurwitz case has $w_1=1$, so $a$ and $1$ must lie in a
common half-plane as above.
It is well known that the series \eqref{zeta_r}
converges absolutely for $\sigma>r$ and admits a meromorphic continuation
to the whole complex plane; see also \cite{Noronha2017}.
Indeed, for $s$ in a compact set $K$ and $z\in\mathcal{H}_{\phi}$,
\[
|z^{-s}|\leq A_K|z|^{-\sigma},
\]
where $A_K>0$ is independent of $z$.
Together with the preceding lower bound, this gives locally uniform
absolute convergence for $\sigma>r$, also locally in the parameters.
Moreover, its only possible singularities are simple poles at
$s=1,2,\ldots,r$.
Also, the residues at these poles are explicitly described by the Barnes
Bernoulli polynomials.
More precisely, let
$B_n^{(r)}(x;\boldsymbol{w})$
for $x\in\mathbb{C}$ and $w_1,\ldots,w_r\in\mathcal{H}_{\phi}$
be defined by the generating function, valid for $t$ sufficiently close to $0$,
\begin{align}\label{Bernoulli_intro}
\frac{t^r e^{xt}}
     {(e^{w_1t}-1)\cdots(e^{w_rt}-1)}
=
\sum_{n=0}^{\infty}
B_n^{(r)}(x;\boldsymbol{w})
\frac{t^n}{n!}.
\end{align}
The singularity at $t=0$ is removable since each $w_j\neq0$.
Taking $x=a\in\mathcal{H}_{\phi}$, the residue formula of
Barnes \cite{Barnes1904} reads
\begin{align}\label{residue_intro}
\Res_{s=j}
\zeta_r(s,a;\boldsymbol{w})
=
\frac{(-1)^{r-j}}
     {(j-1)!(r-j)!}
B_{r-j}^{(r)}(a;\boldsymbol{w})
\qquad
(1\le j\le r).
\end{align}
If the right-hand side vanishes, the corresponding singularity is removable.
The finite parts, that is, the constant terms of the Laurent expansions,
have also been studied. Barnes' relation with the multiple polygamma
functions is recalled by Noronha \cite{Noronha2017}*{Eq.~(6)}.
Noronha gives series and limit representations for the finite parts
at every possible pole in arbitrary dimension under the common
half-plane condition \cite{Noronha2017}*{Eqs.~(12) and~(18)}.
He also gives integral representations under the additional conditions
$\Re a>0$ and $\Re w_i>0$ for his real-axis integrals
\cite{Noronha2017}*{Section~3, Eq.~(26)}.
These results provide the finite-part cases with which we compare
our formulae below.

The higher Laurent coefficients $k\geq1$ are the main subject of the
present paper. Our purpose is to give explicit and unified formulae
valid at every order, retaining the known finite parts as the
order-zero cases.
Higher coefficients can also be extracted by further expansion of
existing analytic continuation formulae, including Barnes' series
reproduced in \cite{Noronha2017}*{Theorem~2.1 and Proposition~2.2};
related integral and series representations are given in
\cites{Ruijsenaars2000,Spreafico2009}.
Here we make the order-dependent logarithmic correction terms explicit
at the highest pole and prove the resulting limit formulae by
Euler--Maclaurin summation. At the lower possible poles, we give an
all-order relation with the Taylor coefficients at $s=0$.
A third result describes the large-order behavior through the residues
at the other poles, independently of these coefficient representations.

The Barnes multiple zeta function is closely related to the Barnes multiple gamma function
$\Gamma_r(x;\boldsymbol{w})$, for which we use the zeta-regularized
normalization specified by the following identity for
$x\in\mathcal{H}_{\phi}$:
\[
\log \Gamma_r(x;\boldsymbol{w})
:=
\left.
\frac{\partial}{\partial s}
\zeta_r(s,x;\boldsymbol{w})
\right|_{s=0}.
\]
Its logarithmic derivatives
\[
\psi_r^{(k)}(x;\boldsymbol{w})
:=
\frac{d^k}{dx^k}
\log \Gamma_r(x;\boldsymbol{w})
\qquad (k\ge 1)
\]
are called the Barnes multiple polygamma functions; the derivatives
are holomorphic derivatives with respect to the complex variable $x$.
These functions naturally arise in the study of special values and Laurent expansions of
$\zeta_r(s,x;\boldsymbol{w})$.

We now state our results for the Laurent coefficients of the Barnes
multiple zeta function \eqref{zeta_r}.
Write its local expansions at $s=r$ and $s=j$ ($1\leq j\leq r-1$) as
    \begin{align}
    & \zeta_r(s,a;\boldsymbol{w}) = \frac{1}{(r-1)!\,w_1\cdots w_r}\frac{1}{s-r}   + \gamma_0(r,a;\boldsymbol{w}) + \sum_{k=1}^\infty \gamma_k(r,a;\boldsymbol{w})(s-r)^k,\label{Laurent_s=r}   \\
    & \zeta_r(s,a;\boldsymbol{w}) = \frac{(-1)^{r-j}B_{r-j}^{(r)}(a;\boldsymbol{w})}{(j-1)!(r-j)!}\frac{1}{s-j}   + \gamma_0(j,a;\boldsymbol{w}) + \sum_{k=1}^\infty \gamma_k(j,a;\boldsymbol{w})(s-j)^k. \label{Laurent_s=j}
    \end{align}
With this notation, the finite part is
\[
\gamma_0(j,a;\boldsymbol{w})
=\operatorname{F.P.}_{s=j}\zeta_r(s,a;\boldsymbol{w}),
\qquad 1\leq j\leq r.
\]
Here we adopt the notation $\operatorname{F.P.}$ used by Noronha
\cite{Noronha2017} for the ``finite part'', namely the constant term
in the Laurent expansion at the indicated point.
Theorem~\ref{th:Main_Theorem1} treats all $k\geq0$ at $s=r$;
its $k=0$ case recovers Noronha's highest-pole finite-part limit.
Theorem~\ref{th:Main_Theorem2} treats the lower poles in all orders,
with the classical Barnes finite-part relation as its $k=0$ case.
The higher coefficients $k\geq1$ are the main subject of the present
all-order formulation. We also write
$\gamma_{-1}(j,a;\boldsymbol{w})=\Res_{s=j}\zeta_r(s,a;\boldsymbol{w})$
for $1\leq j\leq r$, including the case of a zero residue.

\medskip

\begin{definition}\label{def:cqr}
For each integer $r\ge2$, let the constants
$c_q^{(r)}\ (q=0,1,2,\ldots)$
be defined by the generating function
\[
\frac{1}
{(1+\varepsilon)(2+\varepsilon)\cdots(r-1+\varepsilon)}
=
\sum_{q=0}^{\infty}
c_q^{(r)}\varepsilon^q,
\qquad |\varepsilon|<1.
\]
\end{definition}

\medskip

\begin{theorem}\label{th:Main_Theorem1}
Let $r\geq2$ and $\boldsymbol{w}=(w_1,\ldots,w_r)$, and assume
$a,w_1,\ldots,w_r\in\mathcal{H}_{\phi}$ for some $-\pi<\phi<\pi$.
Let the constants $c_q^{(r)}$ be defined by
Definition \ref{def:cqr}.
For each $k=0,1,2,\ldots$, the Laurent coefficient
$\gamma_k(r,a;\boldsymbol{w})$ is given by
\begin{align}
\gamma_k(r,a;\boldsymbol{w})
&=
\lim_{M\to\infty}
\Bigg\{
\frac{(-1)^k}{k!}
\sum_{m_1=0}^M\cdots\sum_{m_r=0}^M
\frac{
\log^k(a+\boldsymbol{m}\cdot\boldsymbol{w})
}{
(a+\boldsymbol{m}\cdot\boldsymbol{w})^r
}
\nonumber\\
&\qquad
-
\frac{1}{w_1\cdots w_r}
\sum_{q=0}^{k+1}
c_q^{(r)}
\frac{(-1)^{k+1-q}}{(k+1-q)!}
\sum_{\emptyset\neq I\subseteq\{1,\ldots,r\}}
(-1)^{|I|}
\nonumber\\
&\hspace{55mm}\times
\log^{k+1-q}
\left(
a+\sum_{i\in I}w_iM
\right)
\Bigg\}.
\label{eq:gamma-r-main}
\end{align}
Here and below, limits of finite sums are taken over positive integers $M$.
In particular,
\begin{align}
\gamma_0(r,a;\boldsymbol{w})
&=
\lim_{M\to\infty}
\Bigg\{
\sum_{m_1=0}^M\cdots\sum_{m_r=0}^M
\frac{1}{(a+\boldsymbol{m}\cdot\boldsymbol{w})^r}
\nonumber\\
&\qquad
+
\frac{1}{(r-1)!w_1\cdots w_r}
\sum_{\emptyset\neq I\subseteq\{1,\ldots,r\}}
(-1)^{|I|}
\log
\left(
a+\sum_{i\in I}w_iM
\right)
\nonumber\\
&\qquad
-
\frac{H_{r-1}}
{(r-1)!w_1\cdots w_r}
\Bigg\},
\label{eq:gamma0-r-main}
\end{align}
where
\[
H_{r-1}
=
1+\frac12+\cdots+\frac1{r-1}.
\]
\end{theorem}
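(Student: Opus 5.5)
The plan is to compare the full series with its truncation to the box $[0,M]^r$ and to control the tail by inclusion--exclusion together with a uniform large-shift asymptotic for $\zeta_r$. Put
\[
S_M(s)=\sum_{m_1=0}^M\cdots\sum_{m_r=0}^M(a+\boldsymbol{m}\cdot\boldsymbol{w})^{-s},
\]
which is entire in $s$. Its $k$-th Taylor coefficient at $s=r$ is exactly the first sum in \eqref{eq:gamma-r-main}, namely $\frac{(-1)^k}{k!}\sum\log^k(\cdot)/(\cdot)^r$. The complement of the box in $\mathbb{Z}_{\ge0}^r$ is the union of the sets $\{m_i\ge M+1\}$. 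Inclusion--exclusion therefore gives, first for $\sigma>r$ and then by continuation,
\[
\zeta_r(s,a;\boldsymbol{w})=S_M(s)+\sum_{\emptyset\neq I\subseteq\{1,\ldots,r\}}(-1)^{|I|+1}\zeta_r\Bigl(s,b_I;\boldsymbol{w}\Bigr),
\qquad b_I=a+(M+1)\sum_{i\in I}w_i .
\]
Each $b_I$ lies in $\mathcal{H}_\phi$, and $\Re(e^{-i\phi}b_I)\ge C'M$ for some $C'>0$.

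The key step, and the one I expect to be the main obstacle, is a uniform asymptotic near $s=r$. Let $D=\{|s-r|\le\delta\}$ with $\delta<1/2$. I want, uniformly for $s\in D$,
\[
\zeta_r(s,b;\boldsymbol{w})=\frac{\Gamma(s-r)}{\Gamma(s)\,w_1\cdots w_r}\,b^{r-s}+E(s,b),
\qquad |E(s,b)|\ll |b|^{-1+\delta'},
\]
as $b\to\infty$ in a sector where $\Re(e^{-i\phi}b)\gg|b|$. Here $E(s,b)$ is holomorphic in $s$ on $D$ and $\delta'>0$ is small. I would prove this from the Mellin representation
\[
\zeta_r(s,b;\boldsymbol{w})=\frac1{\Gamma(s)}\int_0^{\infty e^{-i\phi}}t^{s-1}\frac{e^{-bt}}{\prod_j(1-e^{-w_jt})}\,dt,
\]
taken along the ray $t=\rho e^{-i\phi}$, where $\Re(w_jt)>0$ and $\Re(bt)\ge\rho\,\Re(e^{-i\phi}b)$. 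On this ray $\prod_j(1-e^{-w_jt})^{-1}-(w_1\cdots w_rt^r)^{-1}$ is $O(\rho^{1-r})$ for $\rho\le1$ and $O(1+\rho^{-r})$ for $\rho\ge1$. The main term integrates to $\Gamma(s-r)b^{r-s}/(\Gamma(s)w_1\cdots w_r)$, using the chosen holomorphic logarithm consistently on $\mathcal{H}_\phi$. The remainder integral converges at $0$ because $\sigma-r+1>0$, and it is $O(|b|^{r-1-\sigma})$, which gives the claimed bound. Care is needed only to check that the branch of $t^{s-1}b^{r-s}$ produced by rotating the contour agrees with the fixed determination of $\log$; I expect this to go through because $bt$ stays in the right half-plane.

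Then I expand in $\varepsilon=s-r$. We have
\[
\frac{\Gamma(s-r)}{\Gamma(s)}=\frac1{\varepsilon(1+\varepsilon)\cdots(r-1+\varepsilon)}=\frac1\varepsilon\sum_q c_q^{(r)}\varepsilon^q
\]
by Definition~\ref{def:cqr}, and $b^{-\varepsilon}=\sum_n(-1)^n\log^n b\,\varepsilon^n/n!$. Since $\sum_{I\ne\emptyset}(-1)^{|I|+1}=1$, the polar parts reproduce the residue $1/((r-1)!\,w_1\cdots w_r)$, which serves as a consistency check. The coefficient of $\varepsilon^k$ coming from the main terms is
\[
-\frac1{w_1\cdots w_r}\sum_{q=0}^{k+1}c_q^{(r)}\frac{(-1)^{k+1-q}}{(k+1-q)!}\sum_{I}(-1)^{|I|}\log^{k+1-q}b_I .
\]
The $\varepsilon^k$-coefficients of the errors $E(s,b_I)$ are $O(M^{-1+\delta'})$ by Cauchy's estimate on $D$, so they tend to $0$. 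Comparing coefficients of $\varepsilon^k$ gives $\gamma_k$ as the $k$-th coefficient of $S_M$ plus this main contribution plus $o(1)$, and letting $M\to\infty$ yields the displayed limit. The only discrepancy is $b_I$ versus $a+\sum_{i\in I}w_iM$. It disappears in the limit because $\log(a+(M+1)W_I)-\log(a+MW_I)=O(1/M)$ and all logarithms are $O(\log M)$, so the differences of their powers tend to $0$.

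Finally, the case $k=0$ follows by substitution. From $c_0^{(r)}=1/(r-1)!$, $c_1^{(r)}=-H_{r-1}/(r-1)!$ and $\sum_I(-1)^{|I|}=-1$, the $q=0$ term gives $+\frac1{(r-1)!\,w_1\cdots w_r}\sum_I(-1)^{|I|}\log(\cdot)$. The $q=1$ term gives $-H_{r-1}/((r-1)!\,w_1\cdots w_r)$. Together these give \eqref{eq:gamma0-r-main}.
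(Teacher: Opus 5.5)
Your proof is correct, but it takes a genuinely different route from the paper.

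The paper never looks at the tail of the lattice sum. It compares $S_M(s)$ with the cube integral $J_{r,M}(s,a)$ over $[0,M]^r$ and evaluates that integral in closed form (Lemma~\ref{lem:multiple_integral}); there the signed sum over subsets $I$ comes from integrating one coordinate at a time. It then shows, by iterated first-order Euler--Maclaurin summation (Lemma~\ref{lem:regularized_difference}), that $S_M-J_{r,M}$ converges locally uniformly near $s=r$ to $H_r=\zeta_r-J_r$. The coefficient formula follows once the $I=\emptyset$ terms cancel the $\log a$ terms of $J_r$, and $c^{(r)}_{k+1}$ is absorbed using $\sum_{I\neq\emptyset}(-1)^{|I|}=-1$.

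You get the same subset structure from lattice inclusion--exclusion instead. This makes $\zeta_r(s,a;\boldsymbol w)-S_M(s)$ an \emph{exact} signed sum of shifted Barnes zeta functions. You then replace Euler--Maclaurin by a one-term large-parameter asymptotic for $\zeta_r(s,b;\boldsymbol w)$, taken from the Mellin integral on the ray $\arg t=-\phi$.

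What each route buys:
\begin{itemize}
\item Yours gives an identity valid for every $M$ with an explicit remainder. That yields a rate $O_k(M^{-1+\delta})$ rather than bare convergence.
\item Yours avoids the boundary-term bookkeeping of the multidimensional Euler--Maclaurin formula.
\item Yours would extend to the lower poles by keeping more terms of the expansion in $b$.
\item The paper's route needs no integral representation and no asymptotics in a parameter.
\item The paper's route needs no $M+1\to M$ correction, since its cube integral produces $a+\sum_{i\in I}w_iM$ directly.
\end{itemize}

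Two details deserve a line each.
\begin{itemize}
\item The main-term integral $\int t^{s-r-1}e^{-bt}\,dt$ converges only for $\sigma>r$. However, $E(s,b)$ is holomorphic for $\sigma>r-1$, so your decomposition holds on $D\setminus\{r\}$ by analytic continuation.
\item The branch point you flagged does go through. If the fixed logarithm is $i\phi+\operatorname{Log}(e^{-i\phi}z)+2\pi i n$, take $\log t=\log\rho-i\phi-2\pi i n$ on the ray. This reproduces $z^{-s}$ for every summand, and gives $b^{r-s}$ in the main term with the same determination.
\end{itemize}
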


\begin{comparisonremark}
For $k=0$, \eqref{eq:gamma0-r-main} is equivalent to
Noronha's finite-part limit \cite{Noronha2017}*{Eq.~(18)} with
his dimension $d$ and pole index $q$ both equal to $r$.
After translating his notation, the logarithmic correction is the
nonempty-subset sum in \eqref{eq:gamma0-r-main}, and the constant
correction is $-H_{r-1}/((r-1)!w_1\cdots w_r)$.
The only remaining difference is the summation cube:
his formula uses $\{0,\ldots,M-1\}^r$ instead of
$\{0,\ldots,M\}^r$. The common half-plane lower bound gives
\[
\left|
\sum_{\substack{0\leq m_1,\ldots,m_r\leq M\\
                 \max_i m_i=M}}
(a+\boldsymbol{m}\cdot\boldsymbol{w})^{-r}
\right|
\leq \bigl((M+1)^r-M^r\bigr)(CM)^{-r}
=O(M^{-1}),
\]
so the limits coincide, with the same logarithm determination.
The cases $k\geq1$ of Theorem~\ref{th:Main_Theorem1} explicitly
extend this type of limit formula beyond the finite part.
\end{comparisonremark}

\medskip


\medskip
\begin{theorem}\label{th:Main_Theorem2}
Let $r\geq2$ and assume $a,w_1,\ldots,w_r\in\mathcal{H}_{\phi}$
for some $-\pi<\phi<\pi$.
For each $j=1,\ldots,r-1$, let the constants $d_{j,n}$
$(n=-1,0,1,\ldots)$ be defined by
\[
\frac{1}{(s)_j}
=
\sum_{n=-1}^{\infty} d_{j,n}s^n,
\qquad
(s)_j=s(s+1)\cdots(s+j-1),
\]
where the expansion is valid for $0<|s|<1$.
Then, for $j=1,\ldots,r-1$ and
$k=-1,0,1,2,\ldots$, the Laurent coefficients of
\eqref{Laurent_s=j} satisfy
\[
\gamma_k(j,a;\boldsymbol{w})
=
(-1)^j
\sum_{m=0}^{k+1}
d_{j,k-m}
\frac{1}{m!}
\frac{\partial^j}{\partial a^j}
\zeta_r^{(m)}(0,a;\boldsymbol{w}).
\]
Here $\zeta_r^{(m)}$ denotes differentiation with respect to $s$,
and $\partial^j/\partial a^j$ denotes holomorphic differentiation in
$a\in\mathcal{H}_{\phi}$ with $\boldsymbol{w}$ fixed.
\end{theorem}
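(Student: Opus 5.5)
The plan is to use the shift identity obtained by differentiating in $a$, and then compare Laurent expansions at $s=0$. For $\sigma>r$ the series \eqref{zeta_r} converges absolutely and locally uniformly in $(s,a)$. This follows from the lower bound $|a+\boldsymbol{m}\cdot\boldsymbol{w}|\geq C(1+\sum m_i)$, with $C$ uniform on compact sets of parameters, together with $|z^{-s}|\le A_K|z|^{-\sigma}$. Hence we may differentiate termwise in $a$. Since $\frac{\partial}{\partial a}(a+\boldsymbol{m}\cdot\boldsymbol{w})^{-s}=-s(a+\boldsymbol{m}\cdot\boldsymbol{w})^{-s-1}$ for our fixed holomorphic logarithm, we get
\[
\frac{\partial}{\partial a}\zeta_r(s,a;\boldsymbol{w})=-s\,\zeta_r(s+1,a;\boldsymbol{w}).
\]
Iterating $j$ times gives
\[
\frac{\partial^j}{\partial a^j}\zeta_r(s,a;\boldsymbol{w})=(-1)^j(s)_j\,\zeta_r(s+j,a;\boldsymbol{w}),
\qquad \sigma>r .
\]

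Next, I extend this identity to all $s$. Both sides are meromorphic in $s$ with possible poles only in a discrete set. The needed input is that the meromorphic continuation of $\zeta_r(s,a;\boldsymbol{w})$ is jointly holomorphic in $(s,a)$ away from $s\in\{1,\ldots,r\}$. This can be taken from the standard continuation, for instance a Hankel-type contour integral of $e^{-at}t^{s-1}\prod_j(1-e^{-w_jt})^{-1}$ rotated into the direction $e^{-i\phi}$, which is locally uniform in the parameters. With joint holomorphy, holomorphic $a$-derivatives commute with the continuation. The identity theorem in $s$ then gives the relation on all of $\mathbb{C}$, minus the poles, for each fixed $a\in\mathcal{H}_\phi$. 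I expect this justification of joint holomorphy, and of exchanging $\partial_a^j$ with $\partial_s^m$ at $s=0$, to be the only genuinely delicate step; everything after it is formal.

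Now I divide by $(s)_j$. For $0<|s|<1$ the factor $(s)_j$ does not vanish, so
\[
\zeta_r(s+j,a;\boldsymbol{w})=(-1)^j\frac{1}{(s)_j}\,\frac{\partial^j}{\partial a^j}\zeta_r(s,a;\boldsymbol{w}).
\]
The function $\zeta_r(\cdot,a;\boldsymbol{w})$ is holomorphic at $s=0$, because its only possible poles are $1,\ldots,r$. By joint holomorphy, $\partial_a^j\zeta_r(s,a;\boldsymbol{w})$ has Taylor expansion $\sum_{m\ge0}\frac{1}{m!}\partial_a^j\zeta_r^{(m)}(0,a;\boldsymbol{w})\,s^m$ near $s=0$. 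Multiplying this by $\sum_{n\ge-1}d_{j,n}s^n$ gives the Laurent expansion of the left side in the variable $s$, that is, in powers of $(s+j)-j$.

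Finally, I compare this with \eqref{Laurent_s=j} at the point $j$. The coefficient of $s^k$, for $k\ge-1$, is $\sum_{m+n=k,\ n\ge-1,\ m\ge0}d_{j,n}\frac{1}{m!}\partial_a^j\zeta_r^{(m)}(0,a;\boldsymbol{w})$. Writing $n=k-m$, the constraint $n\ge-1$ becomes $0\le m\le k+1$, which is exactly the stated formula. As a consistency check, the case $k=-1$ gives $\gamma_{-1}=(-1)^jd_{j,-1}\partial_a^j\zeta_r(0,a;\boldsymbol{w})$, which should match the residue \eqref{residue_intro}, including the case where that residue is zero.
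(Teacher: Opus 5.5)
Your proof is correct and follows the paper's argument: termwise differentiation gives $\partial_a^j\zeta_r(s,a;\boldsymbol w)=(-1)^j(s)_j\zeta_r(s+j,a;\boldsymbol w)$, which you continue to all $s$, then Taylor-expand $\partial_a^j\zeta_r$ at $s=0$, multiply by $\sum_n d_{j,n}s^n$, and compare coefficients. The only difference is how joint holomorphy in $(s,a)$ is justified. You cite a rotated Hankel-contour representation; this fixes one branch of the logarithm, but any other holomorphic logarithm on $\mathcal{H}_\phi$ only multiplies $\zeta_r$ by an entire, $a$-independent factor $e^{2\pi i n s}$ with $n\in\mathbb{Z}$, so nothing changes. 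The paper instead derives joint holomorphy self-containedly from the binomial expansion $\zeta_r(s,a_0+h;\boldsymbol w)=\sum_n\frac{(-1)^n(s)_n}{n!}\zeta_r(s+n,a_0;\boldsymbol w)h^n$ together with the tail bound $|\zeta_r(s+n,a_0;\boldsymbol w)|\le C_K\alpha_0^{-n}$.
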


\medskip

\begin{comparisonremark}
Let $H_0=0$. Since
\[
\frac{1}{(s)_j}
=\frac{1}{(j-1)!}\left(\frac1s-H_{j-1}+O(s)\right),
\]
we have $d_{j,-1}=1/(j-1)!$ and
$d_{j,0}=-H_{j-1}/(j-1)!$.
The case $k=-1$ of Theorem~\ref{th:Main_Theorem2} gives
\[
\gamma_{-1}(j,a;\boldsymbol{w})
=\frac{(-1)^j}{(j-1)!}
\frac{\partial^j}{\partial a^j}\zeta_r(0,a;\boldsymbol{w}).
\]
Substitution in its $k=0$ case yields
\begin{align*}
\operatorname{F.P.}_{s=j}\zeta_r(s,a;\boldsymbol{w})
&=\frac{(-1)^j}{(j-1)!}
\frac{\partial^j}{\partial a^j}\zeta_r'(0,a;\boldsymbol{w})\\
&\quad-H_{j-1}\Res_{s=j}\zeta_r(s,a;\boldsymbol{w}).
\end{align*}
This is the classical Barnes relation recalled by Noronha
\cite{Noronha2017}*{Eq.~(6)}.
The normalization factor $\rho_B(\boldsymbol{w})$ in his definition
of the multiple gamma function is independent of $a$, so it disappears
under these derivatives.
The cases $k\geq1$ extend this finite-part relation to all orders
by expanding the same parameter-differentiation identity.
\end{comparisonremark}

\medskip

Our third main result, Theorem~\ref{th:all-poles-coefficients} in
Section~\ref{sec:large-order}, concerns $k\to\infty$ with the parameters
fixed. Subtracting all principal parts leaves an entire function.
Cauchy's estimate then gives an exact sum of contributions from the
other poles with a remainder of $O(R^{-k})$ for every fixed $R>0$.
Keeping only the neighboring contributions gives an $O(2^{-k})$
remainder and, at intermediate poles, the limits
\[
\lim_{n\to\infty}\gamma_{2n}(j,a;\boldsymbol w)=R_{j-1}-R_{j+1},
\qquad
\lim_{n\to\infty}\gamma_{2n+1}(j,a;\boldsymbol w)=-R_{j-1}-R_{j+1},
\]
where $R_q=\Res_{s=q}\zeta_r(s,a;\boldsymbol w)$ and
$2\leq j\leq r-1$. A zero residue removes the corresponding
contribution, and equidistant contributions may cancel on one parity.
The all-poles formula extends the double-zeta asymptotic forms in
\cite{Miyagawa2025_LEBDZ}*{Theorems~2.6 and~2.7} to arbitrary dimension.
The proof uses only the meromorphic continuation and Cauchy's formula;
it does not require parameter asymptotics or estimates from the first
two theorems.

For comparison, in the classical Hurwitz case with $a>0$ and the real
logarithm on the positive real axis, the coefficients in the Laurent
expansion at $s=1$ are normalized generalized Euler--Stieltjes constants.

The Hurwitz zeta function admits the expansion
\[
\zeta_H(s,a)=\frac{1}{s-1}+\gamma_0(a)+\sum_{k=1}^\infty \gamma_k(a)(s-1)^k,
\]
where
\[
\gamma_k(a)=\frac{(-1)^k}{k!} \lim_{M\to\infty}
\left(
\sum_{m=0}^M \frac{\log^k(m+a)}{m+a}
-
\frac{\log^{k+1}(M+a)}{k+1}
\right).
\]
In this comparison, $\gamma_k:=\gamma_k(1)$ and $\gamma_k(a)$ denote
the coefficients of the corresponding powers in the Laurent expansions.
Thus they are $(-1)^k/k!$ times the classical Stieltjes constants.
With this normalization, the estimate of Zhang and Williams
\cite{ZhangWilliams1994} for the classical Euler--Stieltjes constants is equivalent to
\begin{equation}
    |\gamma_k|
    \le
    \frac{3+(-1)^k}{(2\pi)^k}
    \cdot
    \frac{(2k)!}{k!\,k^{k+1}}
    \qquad (k\ge 1).
    \label{estim_gamma_k}
\end{equation}
For a discussion of equivalent estimates for the Stieltjes constants,
see also Finch \cite{Finch2003}*{\S 2.21}.
Similarly, rewriting the estimate of Berndt \cite{Berndt1972} according to the normalization adopted here, we have
\begin{equation}
    \left|
        \gamma_k(a)
        -
        \frac{(-1)^k(\log a)^k}{a k!}
    \right|
    \le
    \frac{3+(-1)^k}{k\pi^k}
    \qquad
    (0<a\le 1,\ k\ge 1).
    \label{estim_gamma_k(a)}
\end{equation}
These bounds concern the coefficient order, rather than asymptotics
in the parameters of the zeta function.

As related work, Matsumoto, Onozuka, and Wakabayashi \cite{MatsumotoOnozukaWakabayashi2020} studied multivariable Laurent-type expansions of the Euler--Zagier multiple zeta function
\[
\zeta_r(s_1, \dots, s_r)
=
\sum_{m_1=1}^\infty \cdots \sum_{m_r=1}^\infty
\frac{1}{
m_1^{s_1}
(m_1+m_2)^{s_2}
\cdots
(m_1+\cdots+m_r)^{s_r}
},
\]
and introduced analogues of Euler--Stieltjes constants in that setting.
This multivariable problem is distinct from the single-variable
Barnes expansions considered here.

The Laurent expansions in the double-zeta case were studied in
\cite{Miyagawa2025_LEBDZ}. We now give the $r=2$ specializations of
Theorems~\ref{th:Main_Theorem1} and~\ref{th:Main_Theorem2}, with
$a,w_1,w_2\in\mathcal{H}_{\phi}$ and the coefficient normalization used here.

\begin{example}\label{ex:double_zeta1}
For $r=2$, let $a,w_1,w_2\in\mathcal{H}_{\phi}$.
Since
\[
c_q^{(2)}=(-1)^q
\qquad (q=0,1,2,\ldots),
\]
Theorem \ref{th:Main_Theorem1} gives, for
$k=0,1,2,\ldots$,
\begin{align}
\gamma_k(2,a;w_1,w_2)
&=
\lim_{M\to\infty}
\frac{(-1)^k}{k!}
\Bigg\{
\sum_{m_1=0}^M\sum_{m_2=0}^M
\frac{
\log^k(a+m_1w_1+m_2w_2)
}{
(a+m_1w_1+m_2w_2)^2
}
\nonumber\\
&\qquad
-
\frac{k!}{w_1w_2}
\Bigg[
1+
\sum_{j=1}^{k+1}
\frac{1}{j!}
\Bigl\{
\log^j(a+w_1M)
+
\log^j(a+w_2M)
\nonumber\\
&\hspace{65mm}
-
\log^j(a+w_1M+w_2M)
\Bigr\}
\Bigg]
\Bigg\}.
\label{Euler_Stieltjes_2}
\end{align}
In particular, putting $k=0$ in
\eqref{Euler_Stieltjes_2}, we obtain
\begin{align}
\gamma_0(2,a;w_1,w_2)
&=
\lim_{M\to\infty}
\Bigg\{
\sum_{m_1=0}^M\sum_{m_2=0}^M
\frac{1}{(a+m_1w_1+m_2w_2)^2}
\nonumber\\
&\qquad
-
\frac{1}{w_1w_2}
\Bigl\{
1+\log(a+w_1M)+\log(a+w_2M)
\nonumber\\
&\qquad\qquad\qquad
-\log(a+w_1M+w_2M)
\Bigr\}
\Bigg\}.
\label{gamma_0}
\end{align}
\end{example}

\begin{example}
Taking $j=1$ in Theorem~\ref{th:Main_Theorem2}, we have
$1/(s)_1=1/s$, and hence the following relation between the Taylor
coefficients at $s=0$ and the Laurent coefficients at $s=1$:
\begin{align}
\gamma_k(1,a;w_1,w_2)
=
-\frac{1}{(k+1)!}
\frac{\partial}{\partial a}
\zeta_2^{(k+1)}(0,a;w_1,w_2),
\label{th:Main_Theorem2-1}
\end{align}
for $k=-1,0,1,2,\ldots$.
In particular, since
\[
\zeta_2'(0,a;w_1,w_2)
=
\log\Gamma_2(a;w_1,w_2),
\]
we have
\[
\gamma_0(1,a;w_1,w_2)
=
-\frac{\partial}{\partial a}
\log\Gamma_2(a;w_1,w_2).
\]
Furthermore, expanding
$\zeta_2(2+\varepsilon,a;w_1,w_2)
=-(1+\varepsilon)^{-1}\partial_a\zeta_2(1+\varepsilon,a;w_1,w_2)$
at $\varepsilon=0$ gives
\begin{align}
\sum_{l=-1}^k
(-1)^{k-l+1}
\frac{\partial}{\partial a}
\gamma_l(1,a;w_1,w_2)
=
\gamma_k(2,a;w_1,w_2),
\label{th:Main_Theorem2-2}
\end{align}
for $k=-1,0,1,2,\ldots$.
\end{example}

\medskip
\begin{example}
For $k=0,1$, formula \eqref{th:Main_Theorem2-1} gives
\begin{align*}
\gamma_0(1,a;w_1,w_2)
&=-\frac{\partial}{\partial a}\zeta_2'(0,a;w_1,w_2)
=-\psi_2^{(1)}(a;w_1,w_2),\\
\gamma_1(1,a;w_1,w_2)
&=-\frac12\frac{\partial}{\partial a}\zeta_2''(0,a;w_1,w_2).
\end{align*}
Here the primes on $\zeta_2$ denote derivatives with respect to $s$.
They must not be confused with derivatives with respect to $a$
of $\log\Gamma_2$, which define the multiple polygamma functions.
\end{example}
\medskip

\section{Auxiliary lemmas and proofs}
\begin{lemma}\label{lem:multiple_integral}
Let $r\geq1$ be an integer, $M>0$ be real, and
$\boldsymbol{w}=(w_1,\ldots,w_r)$, and assume
$a,w_1,\ldots,w_r\in\mathcal{H}_{\phi}$ for some $-\pi<\phi<\pi$.
The identities below hold for any single holomorphic determination of the
logarithm on $\mathcal{H}_{\phi}$ used consistently in all the powers.
Put
\[
J_{r,M}(s,a)
=
\int_0^M\cdots\int_0^M
(a+w_1x_1+\cdots+w_rx_r)^{-s}
\,dx_1\cdots dx_r.
\]
Then, for $s\notin\{1,2,\ldots,r\}$,
\[
J_{r,M}(s,a)
=
\frac{1}{w_1\cdots w_r}
\frac{1}{(s-1)(s-2)\cdots(s-r)}
\sum_{I\subseteq\{1,\ldots,r\}}
(-1)^{|I|}
\left(
a+\sum_{i\in I}w_iM
\right)^{r-s}.
\]
The apparent singularities on the right-hand side at
$s=1,\ldots,r$ are removable, and the identity extends to all
$s\in\mathbb{C}$.
Moreover, for $\sigma>r$, the absolutely convergent integral
\[
J_r(s,a)
:=
\int_0^\infty\cdots\int_0^\infty
(a+w_1x_1+\cdots+w_rx_r)^{-s}
\,dx_1\cdots dx_r
\]
is given by
\[
J_r(s,a)
=
\frac{a^{r-s}}
{w_1\cdots w_r
(s-1)(s-2)\cdots(s-r)}.
\]
\end{lemma}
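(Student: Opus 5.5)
We argue by induction on $r$, integrating one variable at a time. The key fact is that for $z$ ranging over $\mathcal{H}_{\phi}$ and any $s\neq1$,
\[
\frac{\partial}{\partial x}\,\frac{(z+wx)^{1-s}}{w(1-s)}=(z+wx)^{-s}
\qquad (w\in\mathcal{H}_{\phi},\ x\ge0).
\]
This holds because $z^{u}=\exp(u\log z)$ with $\log$ holomorphic on $\mathcal{H}_{\phi}$, so $\frac{d}{dz}z^{u}=u z^{u-1}$ for the same determination. The segment $x\mapsto z+wx$ stays in $\mathcal{H}_{\phi}$ by the half-plane lower bound of the introduction. Hence
\[
\int_0^M (z+wx)^{-s}\,dx=\frac{(z+wM)^{1-s}-z^{1-s}}{w(1-s)} .
\]
This is the case $r=1$ of the finite identity, since $\frac{1}{(s-1)}\bigl(z^{1-s}-(z+wM)^{1-s}\bigr)$ matches the sum over $I\subseteq\{1\}$.

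For the inductive step we apply Fubini, which is legitimate because the integrand is continuous on a compact cube. Integrating first in $x_r$ with $z=a+w_1x_1+\cdots+w_{r-1}x_{r-1}$ gives
\[
J_{r,M}(s,a)=\frac{1}{w_r(s-1)}\Bigl(J_{r-1,M}(s-1,a)-J_{r-1,M}(s-1,a+w_rM)\Bigr),
\]
where $J_{r-1,M}$ uses the weights $w_1,\ldots,w_{r-1}$. This applies for $s\notin\{1,\ldots,r\}$, so that $s-1\notin\{1,\ldots,r-1\}$. We then substitute the induction hypothesis:
\[
J_{r-1,M}(s-1,b)=\frac{1}{w_1\cdots w_{r-1}(s-2)\cdots(s-r)}\sum_{I'\subseteq\{1,\ldots,r-1\}}(-1)^{|I'|}\Bigl(b+\sum_{i\in I'}w_iM\Bigr)^{r-s}.
\]
Taking $b=a$ and $b=a+w_rM$ splits the subsets $I\subseteq\{1,\ldots,r\}$ according to whether $r\notin I$ or $r\in I$, the sign $(-1)^{|I|}$ absorbing the minus. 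The point $a+w_rM+\sum_{I'}w_iM$ lies in $\mathcal{H}_{\phi}$, so the same determination applies throughout and the formula follows.

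For removability and extension to all $s$: the left side $J_{r,M}(s,a)$ is entire in $s$. This holds because the integrand is entire in $s$ and locally uniformly bounded on the compact cube, where $|z^{-s}|\le A_K|z|^{-\sigma}$ and $|z|\ge C$; holomorphy under the integral then follows by Morera/dominated convergence. So the right side, which equals an entire function off a finite set, has removable singularities at $s=1,\ldots,r$, and the identity persists by continuity. This step is the only one needing care, and it is routine.

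For the infinite integral with $\sigma>r$, absolute convergence follows from
\[
|(a+\boldsymbol{w}\cdot\boldsymbol{x})^{-s}|\le A_K\bigl(C(1+\textstyle\sum x_j)\bigr)^{-\sigma},
\]
which is integrable over $[0,\infty)^r$ when $\sigma>r$. Hence $J_r(s,a)=\lim_{M\to\infty}J_{r,M}(s,a)$ by dominated convergence. In the finite formula every term with $I\neq\emptyset$ satisfies
\[
\Bigl|\bigl(a+\textstyle\sum_{i\in I}w_iM\bigr)^{r-s}\Bigr|\le A_K\bigl(C(1+M)\bigr)^{r-\sigma}\to0,
\]
leaving only the $I=\emptyset$ term $a^{r-s}$. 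The sign of that term matches the claimed formula because
\[
(s-1)\cdots(s-r)\quad\text{vs.}\quad(1-s)\cdots
\]
is already accounted for in the finite identity as stated. The main obstacle, such as it is, is bookkeeping of signs in the inductive step, together with checking that $\frac{d}{dz}z^{u}=uz^{u-1}$ holds for an arbitrary holomorphic branch. The latter is immediate from $z^u=e^{u\log z}$ and $(\log z)'=1/z$.
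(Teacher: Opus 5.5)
Your proposal is correct and follows essentially the same route as the paper: induction on $r$ by integrating out $x_r$ first, splitting the subsets according to whether $r\in I$, deducing removability from the entireness of $J_{r,M}(s,a)$, and letting $M\to\infty$ by dominated convergence, where the nonempty-$I$ terms decay like $(1+M)^{r-\sigma}$. Your closing remark about the sign is unnecessary, since the $I=\emptyset$ term already has sign $(-1)^0=1$ and carries exactly the stated prefactor.
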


\begin{proof}
We first suppose $s\notin\{1,\ldots,r\}$ and prove the formula for
$J_{r,M}(s,a)$ by induction on $r$.
The variables $x_1,\ldots,x_r$ remain real and nonnegative.
The common half-plane condition ensures that every integration argument,
including the shifted parameter $a+w_rM$, lies in $\mathcal{H}_{\phi}$.
For $r=1$,
\[
J_{1,M}(s,a)
=
\int_0^M(a+w_1x)^{-s}\,dx
=
\frac{a^{1-s}-(a+w_1M)^{1-s}}
{w_1(s-1)},
\]
which is the desired formula.
Suppose that the assertion holds for $r-1$.
Put
\[
A=a+w_1x_1+\cdots+w_{r-1}x_{r-1}.
\]
Then
\begin{align*}
J_{r,M}(s,a)
&=
\int_0^M\cdots\int_0^M
\left\{
\int_0^M(A+w_rx_r)^{-s}\,dx_r
\right\}
dx_1\cdots dx_{r-1}
\\
&=
\frac{1}{w_r(1-s)}
\left\{
J_{r-1,M}(s-1,a+w_rM)
-
J_{r-1,M}(s-1,a)
\right\}.
\end{align*}
Substituting the induction hypothesis into the preceding identity and using
$
1/(1-s)=-1/(s-1),
$
we obtain
\begin{align*}
J_{r,M}(s,a)
&=
\frac{1}{w_1\cdots w_r}
\frac{1}{(s-1)\cdots(s-r)}
\Bigg\{
\sum_{J\subseteq\{1,\ldots,r-1\}}
(-1)^{|J|}
\left(
a+\sum_{j\in J}w_jM
\right)^{r-s}
\\
&\qquad\qquad
-
\sum_{J\subseteq\{1,\ldots,r-1\}}
(-1)^{|J|}
\left(
a+w_rM+\sum_{j\in J}w_jM
\right)^{r-s}
\Bigg\}.
\end{align*}
The first sum corresponds precisely to the subsets
$I\subseteq\{1,\ldots,r\}$ not containing $r$:
putting $I=J$, we have
$
(-1)^{|J|}=(-1)^{|I|}.
$
The second sum corresponds to the subsets containing $r$.
Indeed, putting
$
I=J\cup\{r\},
$
we have $|I|=|J|+1$, and therefore
$
-(-1)^{|J|}
=
(-1)^{|J|+1}
=
(-1)^{|I|}.
$
Consequently, the two sums combine to give
\[
\sum_{I\subseteq\{1,\ldots,r\}}
(-1)^{|I|}
\left(
a+\sum_{i\in I}w_iM
\right)^{r-s}.
\]
Equivalently, the convention with denominators $(1-s),\ldots,(r-s)$
and signs $(-1)^{r-|I|}$ gives the same expression, since
\[
\frac{(-1)^{r-|I|}}{(1-s)(2-s)\cdots(r-s)}
=
\frac{(-1)^{|I|}}{(s-1)(s-2)\cdots(s-r)}.
\]
Hence
\[
J_{r,M}(s,a)
=
\frac{1}{w_1\cdots w_r}
\frac{1}{(s-1)\cdots(s-r)}
\sum_{I\subseteq\{1,\ldots,r\}}
(-1)^{|I|}
\left(
a+\sum_{i\in I}w_iM
\right)^{r-s}.
\]
On a finite integration cube the arguments lie in a compact subset of
$\mathcal{H}_{\phi}$. The integrand is entire in $s$ and uniformly
bounded when $s$ ranges over a compact set. Integration therefore
preserves holomorphy, so $J_{r,M}(s,a)$ is entire. The apparent
singularities on the right-hand side at $s=1,\ldots,r$ are removable.
Finally, for a compact set $K\subset\{s:\sigma>r\}$, put
$\sigma_K=\inf_{s\in K}\sigma>r$.
The lower bound from the introduction gives
\[
\left|(a+w_1x_1+\cdots+w_rx_r)^{-s}\right|
\leq A_K(1+x_1+\cdots+x_r)^{-\sigma_K}
\qquad(s\in K).
\]
The right-hand side is integrable on $[0,\infty)^r$.
Dominated convergence therefore implies that the finite integrals
$J_{r,M}(s,a)$ converge to $J_r(s,a)$ locally uniformly for $\sigma>r$.
Absolute convergence also permits successive integration by Fubini's theorem.
For every nonempty $I\subseteq\{1,\ldots,r\}$, the same lower bound gives
\[
\left|\left(a+\sum_{i\in I}w_iM\right)^{r-s}\right|
\leq B_K(1+M)^{r-\sigma_K}
\qquad(s\in K),
\]
where $B_K$ is independent of $M$ and $I$.
Since $r-\sigma_K<0$, these terms tend to zero uniformly on $K$.
Only the term $I=\emptyset$ survives in the finite-integral formula,
so we obtain
\begin{align*}
& \int_0^\infty\cdots\int_0^\infty
(a+w_1x_1+\cdots+w_rx_r)^{-s}
 \,dx_1\cdots dx_r \\
& \qquad =
\frac{a^{r-s}}
{w_1\cdots w_r
(s-1)(s-2)\cdots(s-r)}.
\end{align*}
\end{proof}

\begin{lemma}\label{lem:regularized_difference}
Assume $a,w_1,\ldots,w_r\in\mathcal{H}_{\phi}$ for some $-\pi<\phi<\pi$,
and let $M$ be a positive integer.
Let
\[
D_{r,M}(s)
=
\sum_{m_1=0}^M\cdots\sum_{m_r=0}^M
(a+\boldsymbol{m}\cdot\boldsymbol{w})^{-s}
-
J_{r,M}(s,a).
\]
Then there exists a neighborhood $U$ of $s=r$ such that
$D_{r,M}(s)$ converges locally uniformly on $U$, as $M\to\infty$,
to a function $H_r(s,a;\boldsymbol{w})$ holomorphic on $U$.

For $s\in U$ with $\sigma>r$,
\[
H_r(s,a;\boldsymbol{w})
=
\zeta_r(s,a;\boldsymbol{w})
-
J_r(s,a).
\]
Consequently,
\[
\zeta_r(s,a;\boldsymbol{w})
=
H_r(s,a;\boldsymbol{w})
+
J_r(s,a)
\]
gives the meromorphic continuation of $\zeta_r$ to a neighborhood
of $s=r$.
\end{lemma}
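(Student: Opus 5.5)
Our plan is to prove Lemma~\ref{lem:regularized_difference} by an $r$-dimensional Euler--Maclaurin comparison between the lattice sum over the cube $\{0,\ldots,M\}^r$ and the integral $J_{r,M}(s,a)$. We write $f_s(\boldsymbol{x})=(a+\boldsymbol{x}\cdot\boldsymbol{w})^{-s}$ for $\boldsymbol{x}\in[0,\infty)^r$. The first step is the one-variable identity
\[
\sum_{m=0}^{M}g(m)-\int_0^M g(x)\,dx
=\frac{g(0)+g(M)}{2}+\int_0^M \Bigl(\{x\}-\tfrac12\Bigr)g'(x)\,dx,
\]
together with its iterated higher-order versions using the periodized Bernoulli polynomials $\widetilde B_\nu(x)$. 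We apply this successively in each variable $x_1,\ldots,x_r$, writing $\sum_{m}=\int+(\sum_m-\int)$ in each coordinate. This expresses $D_{r,M}(s)$ as a finite sum of terms indexed by partitions of $\{1,\ldots,r\}$ into three sets: coordinates integrated, coordinates evaluated at boundary points $0$ or $M$, and coordinates carrying a remainder kernel $\widetilde B_N(x_i)\partial_{x_i}^{N}$, with $N$ fixed large (say $N=r+2$).

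The second step is to bound each resulting term uniformly for $s$ in a small disc $U$ around $r$ with $\sigma>r-\delta$. Derivatives satisfy
\[
\partial^{\boldsymbol{\nu}}_{\boldsymbol{x}}f_s=(-1)^{|\nu|}(s)_{|\nu|}\boldsymbol{w}^{\boldsymbol{\nu}}f_{s+|\nu|}.
\]
Together with the lower bound $|a+\boldsymbol{x}\cdot\boldsymbol{w}|\ge C(1+\sum x_i)$ and $|z^{-s}|\le A_K|z|^{-\sigma}$, this gives
\[
|\partial^{\boldsymbol{\nu}} f_s(\boldsymbol{x})|\ll (1+|\boldsymbol{x}|)^{-\sigma-|\nu|}.
\]
We then check convergence as $M\to\infty$ piece by piece. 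A term with $p$ integrated coordinates and $q\ge1$ remainder coordinates carrying $N$ derivatives has an absolutely convergent integrand over $[0,\infty)^{p+q}$, since $\sigma+qN>p+q$. Boundary evaluations at $x_i=M$ decay like a negative power of $M$ whenever the total decay exponent beats the remaining dimension. Boundary evaluations at $x_i=0$ reduce to lower-dimensional objects of the same shape. Each term is therefore a holomorphic function of $s$, converging locally uniformly by dominated convergence and the Weierstrass theorem, provided its integrand is integrable near $s=r$.

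The main obstacle is the terms with no derivative at all, namely pure boundary sums. For example, the term $\tfrac12\sum_{m_2,\ldots,m_r}f_s(0,m_2,\ldots)$ is essentially an $(r-1)$-dimensional Barnes sum at $s\approx r$. It converges, but it is not immediately in integral form, and a lower-order piece such as $\tfrac12 J_{r-1,M}(s,\cdot)$ appears. To handle this cleanly, we would instead argue by induction on $r$. We prove the stronger statement that for every $d\le r$ and all shifted parameters $a'\in\mathcal H_\phi$, the $d$-dimensional difference converges locally uniformly on the region $\sigma>d-1+\delta'$ for some fixed $\delta'<1$. The case $d=1$ is the classical one-variable Euler--Maclaurin estimate, where
\[
\sum_{m=0}^M(a+mw)^{-s}-\int_0^M(a+xw)^{-s}\,dx
\]
converges for $\sigma>0$ via the $\widetilde B_1$ remainder. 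In the inductive step, we apply the one-variable formula only in $x_r$. The piece $\sum_{m'}\int_0^M f_s\,dx_r$ equals $\bigl(\sum_{m'}-\int\bigr)$ applied to $F=\frac{1}{w_r(s-1)}\bigl[f_{s-1}(\cdot,0)-f_{s-1}(\cdot,M)\bigr]$ plus $J_{r,M}$. This is handled by the inductive hypothesis at $s-1$, where $\sigma-1>r-2+\delta'$. The factor $(s-1)^{-1}$ is harmless near $s=r$ since $r\ge2$, and the $M$-shifted part tends to $0$ by the lower bound. The boundary and remainder pieces are $(r-1)$-dimensional sums of $f_s$ or of $\widetilde B_1\partial_{x_r}f_s$, which converge absolutely for $\sigma>r-1$. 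Finally, for $\sigma>r$ the limit equals $\zeta_r-J_r$ directly, by absolute convergence of the series and by Lemma~\ref{lem:multiple_integral}. This gives the continuation formula.
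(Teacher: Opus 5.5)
There is a genuine gap in the induction, at the sentence ``the $M$-shifted part tends to $0$ by the lower bound.'' That part is $-\frac{1}{w_r(s-1)}D_{r-1,M}(s-1,a+w_rM)$. It is an $(r-1)$-dimensional difference whose parameter $a+w_rM$ moves with $M$ and leaves every compact subset of $\mathcal H_\phi$.

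\emph{Why neither tool you cite covers it.} Your inductive hypothesis gives convergence as $M\to\infty$ only for each fixed (or locally varying) $a'$, so it says nothing about this term. The lower bound alone does not suffice either. For $\boldsymbol m'\in\{0,\ldots,M\}^{r-1}$ one has $|a+w_rM+\boldsymbol m'\cdot\boldsymbol w'|\asymp M$. So the lattice sum and the integral in this difference are each of size about $M^{r-1}\cdot M^{1-\sigma}=M^{r-\sigma}$, which does not tend to $0$ when $\sigma\le r$. That is exactly the part of $U$ where the lemma has content, and Theorem~\ref{th:Main_Theorem1} needs it, since it uses Cauchy's formula on a full circle around $r$. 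The actual decay $O(M^{r-1-\sigma})$ comes only from cancellation between sum and integral.

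\emph{How to close the induction.} You must carry a parameter-uniform estimate, for instance $|D_{d,M}(s,b)|\le C_K\,(1+\Re(e^{-i\phi}b))^{d-1-\sigma_K}$ for all $M$ and all $b$ with $\Re(e^{-i\phi}b)\ge\alpha$, where $\sigma_K=\inf_{s\in K}\sigma$. Your base case and your decomposition in $x_r$ do propagate this bound. The boundary and $\widetilde B_1$ pieces are bounded by $\sum_{\boldsymbol m'}(1+\Re(e^{-i\phi}b)+|\boldsymbol m'|)^{-\sigma_K}\ll(1+\Re(e^{-i\phi}b))^{r-1-\sigma_K}$. With this hypothesis the shifted term really is $O(M^{r-1-\sigma_K})$.

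Separately, the obstacle that made you abandon your first plan is not real, and that plan is essentially the paper's proof. The first-order $\widetilde B_1$ formula suffices; $N=r+2$ is unnecessary. Apply it successively in all $r$ variables, so that the pure boundary sums are expanded too. Then every term of $D_{r,M}(s)$ is an integral. Let $I$ be the integrated coordinates and $\nu$ the number of coordinates carrying $\widetilde B_1\partial_{x_i}$, with the rest evaluated at $0$ or $M$. The integrand is $\ll(1+\sum x_i)^{-\sigma-\nu}$ over $|I|+\nu$ variables, which is integrable if and only if $\sigma>|I|$. Every term except $J_{r,M}$ has $|I|\le r-1$, so everything converges on $\{\sigma>r-1\}$. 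A term with some coordinate at $M$ is $O(M^{|I|-\sigma})$. For example, $\tfrac12\sum_{m_2,\ldots,m_r}f_s(0,m_2,\ldots,m_r)$ just produces an $(r-1)$-dimensional integral of type $\tfrac12J_{r-1,M}(s,\cdot)$ plus such terms. That integral converges absolutely for $\sigma>r-1$. Because each piece is estimated in absolute value, no moving-parameter difference ever appears, which is why the direct route avoids the cancellation issue above.
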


\begin{proof}
For a continuously differentiable function $g$ and an integer $M\geq1$,
the first-order Euler--Maclaurin formula is
\[
\sum_{m=0}^{M}g(m)
=\int_0^M g(x)\,dx+\frac{g(0)+g(M)}{2}
+\int_0^M\widetilde B_1(x)g'(x)\,dx,
\]
where $\widetilde B_1(x)=x-\lfloor x\rfloor-\tfrac12$ is bounded.
Apply this formula successively with
respect to the real variables $x_1,\ldots,x_r$ to the complex-valued function
\[
f_s(\boldsymbol{x})=(a+w_1x_1+\cdots+w_rx_r)^{-s}.
\]
The formula applies to its real and imaginary parts separately.
For a multi-index $\boldsymbol{\mu}=(\mu_1,\ldots,\mu_r)$ with
$\nu=\mu_1+\cdots+\mu_r$, differentiation gives
\[
\partial_{\boldsymbol{x}}^{\boldsymbol{\mu}}f_s(\boldsymbol{x})
=(-1)^\nu(s)_\nu\prod_{j=1}^r w_j^{\mu_j}
(a+\boldsymbol{x}\cdot\boldsymbol{w})^{-s-\nu},
\qquad (s)_0=1.
\]
Choose $U$ to be a neighborhood of $r$ contained in $\{s:\sigma>r-1\}$.
For each compact $K\subset U$, put $\sigma_K=\inf_{s\in K}\sigma>r-1$.
The lower bound in the introduction implies
\[
\left|\partial_{\boldsymbol{x}}^{\boldsymbol{\mu}}f_s(\boldsymbol{x})\right|
\leq A_{K,\boldsymbol{\mu}}
(1+x_1+\cdots+x_r)^{-\sigma_K-\nu},
\qquad s\in K.
\]
The constants are independent of $M$ and can also be chosen uniformly
when the parameters range over a compact subset of $\mathcal{H}_{\phi}^{r+1}$.
The difference $D_{r,M}(s)$ is then expressed as a finite sum of
boundary integrals and remainder integrals.

Every boundary integral has dimension at most $r-1$ and is dominated,
uniformly for $s\in K$, by a constant times
\[
\int_0^\infty \cdots \int_0^\infty
(1+x_1+\cdots+x_d)^{-\sigma_K}
\,dx_1\cdots dx_d,
\qquad d\leq r-1,
\]
which is finite because $\sigma_K>d$.

Each remainder term contains at least one derivative with respect
to one of the variables and a bounded periodic Bernoulli factor.
Hence it is dominated, uniformly for $s\in K$, by a constant times
\[
\int_0^\infty \cdots \int_0^\infty
(1+x_1+\cdots+x_d)^{-\sigma_K-\nu}
\,dx_1\cdots dx_d,
\]
where $d\leq r$ and $\nu\geq1$.
These integrals are finite because $\sigma_K+\nu>d$.
Any term with a coordinate fixed at the upper endpoint $M$ is bounded by
a constant times $(1+M)^{d-\sigma_K-\nu}$ (with $\nu=0$ allowed
for boundary terms), and therefore tends to zero uniformly on $K$.
The other terms converge uniformly on $K$ by the same integrable bounds.
The triangle inequality also gives
$|a+\boldsymbol{x}\cdot\boldsymbol{w}|\leq C'(1+x_1+\cdots+x_r)$.
Together with the lower bound and the bounded imaginary part of the
fixed logarithm, this controls the logarithmic factors. Differentiating
with respect to $s$ gives finite sums containing powers of logarithms,
bounded by constants times $(1+\log(1+x_1+\cdots+x_r))^k$ for each fixed
$k\geq0$; these factors do not affect convergence under the strict inequalities above.

It follows that $D_{r,M}(s)$ converges locally uniformly on $U$ to
a function $H_r(s,a;\boldsymbol{w})$ holomorphic there.

For $s\in U$ with $\sigma>r$, both the finite sum and the finite integral converge
to their corresponding infinite expressions as $M\to\infty$.
Therefore,
\[
H_r(s,a;\boldsymbol{w})
=
\zeta_r(s,a;\boldsymbol{w})
-
J_r(s,a),
\]
which proves the assertion.
\end{proof}

\bigskip

\medskip

\begin{proof}[Proof of Theorem \ref{th:Main_Theorem1}]
The common half-plane assumption is in force throughout this proof.
Put
$
s=r+\varepsilon.
$
For the finite $r$-fold sum, we have
\begin{align*}
(a+\boldsymbol{m}\cdot\boldsymbol{w})^{-r-\varepsilon}
&=
\frac{1}{(a+\boldsymbol{m}\cdot\boldsymbol{w})^r}
\exp
\left(
-\varepsilon
\log(a+\boldsymbol{m}\cdot\boldsymbol{w})
\right)
\\
&=
\frac{1}{(a+\boldsymbol{m}\cdot\boldsymbol{w})^r}
\sum_{k=0}^{\infty}
\frac{(-1)^k}{k!}
\log^k(a+\boldsymbol{m}\cdot\boldsymbol{w})
\varepsilon^k.
\end{align*}
Hence the coefficient of $\varepsilon^k$ in
\[
\sum_{m_1=0}^M\cdots\sum_{m_r=0}^M
(a+\boldsymbol{m}\cdot\boldsymbol{w})^{-r-\varepsilon}
= 
\sum_{k=0}^{\infty}
\left\{
\frac{(-1)^k}{k!}
\sum_{m_1=0}^M\cdots\sum_{m_r=0}^M
\frac{
\log^k(a+\boldsymbol{m}\cdot\boldsymbol{w})
}{
(a+\boldsymbol{m}\cdot\boldsymbol{w})^r
}
\right\}
\varepsilon^{k}
\]
is
\begin{align}
\frac{(-1)^k}{k!}
\sum_{m_1=0}^M\cdots\sum_{m_r=0}^M
\frac{
\log^k(a+\boldsymbol{m}\cdot\boldsymbol{w})
}{
(a+\boldsymbol{m}\cdot\boldsymbol{w})^r
}.
\label{eq:finite-sum-coefficient}
\end{align}
Next, by Lemma \ref{lem:multiple_integral},
\begin{align*}
J_{r,M}(r+\varepsilon,a)
&=
\frac{1}{w_1\cdots w_r}
\frac{1}{\varepsilon}
\frac{1}
{(1+\varepsilon)(2+\varepsilon)\cdots(r-1+\varepsilon)}
\\
&\quad\times
\sum_{I\subseteq\{1,\ldots,r\}}
(-1)^{|I|}
\left(
a+\sum_{i\in I}w_iM
\right)^{-\varepsilon}.
\end{align*}
By Definition \ref{def:cqr},
\[
\frac{1}
{(1+\varepsilon)(2+\varepsilon)\cdots(r-1+\varepsilon)}
=
\sum_{q=0}^{\infty}
c_q^{(r)}\varepsilon^q,
\]
whereas
\[
\left(
a+\sum_{i\in I}w_iM
\right)^{-\varepsilon}
=
\sum_{n=0}^{\infty}
\frac{(-1)^n}{n!}
\log^n
\left(
a+\sum_{i\in I}w_iM
\right)
\varepsilon^n.
\]
Since
\[
\sum_{I\subseteq\{1,\ldots,r\}}(-1)^{|I|}
=
(1-1)^r
=
0,
\]
the terms corresponding to $n=0$ cancel.
It follows that the coefficient of $\varepsilon^k$ in
$J_{r,M}(r+\varepsilon,a)$ is
\begin{align}
&\frac{1}{w_1\cdots w_r}
\sum_{q=0}^{k}
c_q^{(r)}
\frac{(-1)^{k+1-q}}{(k+1-q)!}
\sum_{I\subseteq\{1,\ldots,r\}}
(-1)^{|I|}
\log^{k+1-q}
\left(
a+\sum_{i\in I}w_iM
\right).
\label{eq:finite-integral-coefficient}
\end{align}

By Lemma \ref{lem:regularized_difference},
\[
D_{r,M}(s)
=
\sum_{m_1=0}^M\cdots\sum_{m_r=0}^M
(a+\boldsymbol{m}\cdot\boldsymbol{w})^{-s}
-
J_{r,M}(s,a)
\]
converges locally uniformly in a neighborhood of $s=r$ to the
holomorphic function $H_r(s,a;\boldsymbol{w})$.
Cauchy's integral formula on a sufficiently small circle centered at
$r$ permits passage to the limit in every Taylor coefficient.
Therefore, comparing the coefficients of $\varepsilon^k$ in
\eqref{eq:finite-sum-coefficient} and
\eqref{eq:finite-integral-coefficient}, we obtain
\begin{align}
\frac{1}{k!}
H_r^{(k)}(r,a;\boldsymbol{w})
&=
\lim_{M\to\infty}
\Bigg\{
\frac{(-1)^k}{k!}
\sum_{m_1=0}^M\cdots\sum_{m_r=0}^M
\frac{
\log^k(a+\boldsymbol{m}\cdot\boldsymbol{w})
}{
(a+\boldsymbol{m}\cdot\boldsymbol{w})^r
}
\nonumber\\
&\qquad
-
\frac{1}{w_1\cdots w_r}
\sum_{q=0}^{k}
c_q^{(r)}
\frac{(-1)^{k+1-q}}{(k+1-q)!}
\sum_{I\subseteq\{1,\ldots,r\}}
(-1)^{|I|}
\nonumber\\
&\hspace{47mm}\times
\log^{k+1-q}
\left(
a+\sum_{i\in I}w_iM
\right)
\Bigg\}.
\label{eq:H-coefficient}
\end{align}
We now calculate the contribution of $J_r(s,a)$.
By Lemma \ref{lem:multiple_integral},
\begin{align*}
J_r(r+\varepsilon,a)
&=
\frac{a^{-\varepsilon}}
{w_1\cdots w_r\,
\varepsilon
(1+\varepsilon)\cdots(r-1+\varepsilon)}
\\
&=
\frac{1}{w_1\cdots w_r}
\frac{1}{\varepsilon}
\left(
\sum_{q=0}^{\infty}
c_q^{(r)}\varepsilon^q
\right)
\left(
\sum_{n=0}^{\infty}
\frac{(-1)^n\log^n a}{n!}
\varepsilon^n
\right).
\end{align*}
Thus
\[
J_r(r+\varepsilon,a)
=
\frac{c_0^{(r)}}{w_1\cdots w_r}
\frac{1}{\varepsilon}
+
\sum_{k=0}^{\infty}
\frac{1}{w_1\cdots w_r}
\left\{
\sum_{q=0}^{k+1}
c_q^{(r)}
\frac{(-1)^{k+1-q}\log^{k+1-q}a}
{(k+1-q)!}
\right\}
\varepsilon^k.
\]
Since
\[
c_0^{(r)}
=
\frac{1}{(r-1)!},
\]
the residue at $s=r$ is
\[
\frac{1}{(r-1)!w_1\cdots w_r}.
\]
On the other hand, by Lemma \ref{lem:regularized_difference},
\[
\zeta_r(s,a;\boldsymbol{w})
=
H_r(s,a;\boldsymbol{w})
+
J_r(s,a)
\]
in a neighborhood of $s=r$.
Hence
\begin{align*}
\gamma_k(r,a;\boldsymbol{w})
&=
\frac{1}{k!}
H_r^{(k)}(r,a;\boldsymbol{w})
+
\frac{1}{w_1\cdots w_r}
\sum_{q=0}^{k+1}
c_q^{(r)}
\frac{(-1)^{k+1-q}\log^{k+1-q}a}
{(k+1-q)!}.
\end{align*}
In \eqref{eq:H-coefficient}, the contribution from the subset
$I=\emptyset$ is
\[
-\frac{1}{w_1\cdots w_r}
\sum_{q=0}^{k}
c_q^{(r)}
\frac{(-1)^{k+1-q}\log^{k+1-q}a}
{(k+1-q)!}.
\]
This cancels exactly with the terms $q=0,\ldots,k$ arising from
$J_r(r+\varepsilon,a)$.
The only remaining term from $J_r$ is the term $q=k+1$, namely
\[
\frac{c_{k+1}^{(r)}}{w_1\cdots w_r}.
\]
Since
\[
\sum_{\emptyset\neq I\subseteq\{1,\ldots,r\}}(-1)^{|I|}=-1,
\]
this constant is exactly the contribution obtained by extending the
sum over $q$ in the nonempty-subset terms to $q=k+1$.
Therefore,
\begin{align*}
\gamma_k(r,a;\boldsymbol{w})
&=
\lim_{M\to\infty}
\Bigg\{
\frac{(-1)^k}{k!}
\sum_{m_1=0}^M\cdots\sum_{m_r=0}^M
\frac{
\log^k(a+\boldsymbol{m}\cdot\boldsymbol{w})
}{
(a+\boldsymbol{m}\cdot\boldsymbol{w})^r
}
\\
&\qquad
-
\frac{1}{w_1\cdots w_r}
\sum_{q=0}^{k+1}
c_q^{(r)}
\frac{(-1)^{k+1-q}}{(k+1-q)!}
\sum_{\emptyset\neq I\subseteq\{1,\ldots,r\}}
(-1)^{|I|}
\\
&\hspace{48mm}\times
\log^{k+1-q}
\left(
a+\sum_{i\in I}w_iM
\right)
\Bigg\},
\end{align*}
which proves \eqref{eq:gamma-r-main}.
Finally, differentiating the generating function in
Definition \ref{def:cqr} at $\varepsilon=0$, we obtain
\[
c_1^{(r)}
=
-\frac{1}{(r-1)!}
\sum_{\nu=1}^{r-1}\frac1{\nu}
=
-\frac{H_{r-1}}{(r-1)!}.
\]
Putting $k=0$ in \eqref{eq:gamma-r-main} therefore gives
\eqref{eq:gamma0-r-main}.
\end{proof}

\medskip

\begin{proof}[Proof of Theorem \ref{th:Main_Theorem2}]
We first recall the relation between differentiation with respect to
$a$ and the shift of the variable $s$.
Fix $\boldsymbol{w}$ and let $a$ vary in $\mathcal{H}_{\phi}$.
On each compact subset of this half-plane the lower bound in the introduction
holds with a uniform positive constant. The defining series and its
$a$-derivatives therefore converge locally uniformly for $\sigma>r$.
Here and below, differentiation in $a$ is holomorphic differentiation.
Thus, for $\sigma>r$, termwise differentiation gives
\[
\frac{\partial}{\partial a}
\zeta_r(s,a;\boldsymbol{w})
=
-s\zeta_r(s+1,a;\boldsymbol{w}).
\]
Repeating this differentiation $j$ times, we obtain
\begin{align}
\frac{\partial^j}{\partial a^j}
\zeta_r(s,a;\boldsymbol{w})
&=
(-1)^j
s(s+1)\cdots(s+j-1)
\zeta_r(s+j,a;\boldsymbol{w})
\nonumber\\
&=
(-1)^j
(s)_j
\zeta_r(s+j,a;\boldsymbol{w}).
\label{eq:a-derivative-shift}
\end{align}
To justify the holomorphic dependence on $a$ after continuation, fix
$a_0\in\mathcal{H}_{\phi}$ and put
$\alpha_0=\Re(e^{-i\phi}a_0)>0$.
The disk $|h|<\alpha_0$ satisfies $a_0+h\in\mathcal H_\phi$.
The fixed holomorphic logarithm makes the local binomial expansion
consistent at every summand. For $\sigma>r$, it gives
\[
\zeta_r(s,a_0+h;\boldsymbol{w})
=\sum_{n=0}^{\infty}\frac{(-1)^n(s)_n}{n!}
\zeta_r(s+n,a_0;\boldsymbol{w})h^n,
\qquad (s)_0=1.
\]
For $s$ in a compact set $K$, $(s)_n/n!$ grows at most polynomially
in $n$, uniformly on $K$. Choose an integer $N$ such that
$\inf_{s\in K}\sigma+N>r$, and put
$A_{\boldsymbol m}=a_0+\boldsymbol m\cdot\boldsymbol w$.
For $n\geq N$, the bound $|A_{\boldsymbol m}|\geq\alpha_0$ gives
\[
\sum_{\boldsymbol m\in\mathbb Z_{\geq0}^r}
|A_{\boldsymbol m}^{-s-n}|
\leq\alpha_0^{N-n}
\sum_{\boldsymbol m\in\mathbb Z_{\geq0}^r}
|A_{\boldsymbol m}^{-s-N}|
\leq C_K\alpha_0^{-n},\qquad s\in K.
\]
The last sum is uniformly bounded by absolute convergence in the
shifted half-plane. Hence
$|\zeta_r(s+n,a_0;\boldsymbol w)|\leq C_K\alpha_0^{-n}$ on $K$.
Thus the tail converges normally for $|h|\leq\rho<\alpha_0$.
The finitely many remaining terms are meromorphic in $s$ and holomorphic
in $h$. At $s=-\ell$ with $\ell\geq0$, a pole of
$\zeta_r(s+n,a_0;\boldsymbol{w})$ requires $n\geq\ell+1$ and is
cancelled by the zero of $(s)_n$.
In particular, all terms are holomorphic near $s=0$.
This proves joint holomorphy near $s=0$ and local holomorphic dependence
on $a$ away from the possible poles.
Consequently, \eqref{eq:a-derivative-shift} holds meromorphically in $s$
by analytic continuation, with holomorphic dependence on $a$.
Since $\zeta_r(s,a;\boldsymbol{w})$ is holomorphic at $s=0$,
its Taylor expansion at $s=0$ is
\[
\zeta_r(s,a;\boldsymbol{w})
=
\sum_{m=0}^{\infty}
\frac{1}{m!}
\zeta_r^{(m)}(0,a;\boldsymbol{w})s^m.
\]
Joint holomorphy near $s=0$ and local uniform convergence of this Taylor
series justify differentiating with respect to $a$ $j$ times, yielding
\begin{align}
\frac{\partial^j}{\partial a^j}
\zeta_r(s,a;\boldsymbol{w})
=
\sum_{m=0}^{\infty}
\frac{1}{m!}
\frac{\partial^j}{\partial a^j}
\zeta_r^{(m)}(0,a;\boldsymbol{w})s^m.
\label{eq:Taylor-a-derivative}
\end{align}
On the other hand, from \eqref{eq:a-derivative-shift},
\[
\zeta_r(s+j,a;\boldsymbol{w})
=
(-1)^j
\frac{1}{(s)_j}
\frac{\partial^j}{\partial a^j}
\zeta_r(s,a;\boldsymbol{w}).
\]
By the definition of $d_{j,n}$,
\[
\frac{1}{(s)_j}
=
\sum_{n=-1}^{\infty}d_{j,n}s^n.
\]
Substituting this expansion and \eqref{eq:Taylor-a-derivative}
into the preceding identity, we obtain
\begin{align}
\zeta_r(s+j,a;\boldsymbol{w})
&=
(-1)^j
\left(
\sum_{n=-1}^{\infty}d_{j,n}s^n
\right)
\left(
\sum_{m=0}^{\infty}
\frac{1}{m!}
\frac{\partial^j}{\partial a^j}
\zeta_r^{(m)}(0,a;\boldsymbol{w})s^m
\right)
\nonumber\\
&=
(-1)^j
\sum_{k=-1}^{\infty}
\left\{
\sum_{m=0}^{k+1}
d_{j,k-m}
\frac{1}{m!}
\frac{\partial^j}{\partial a^j}
\zeta_r^{(m)}(0,a;\boldsymbol{w})
\right\}
s^k.
\label{eq:product-expansion}
\end{align}
The Laurent expansion of $\zeta_r(s,a;\boldsymbol{w})$ at $s=j$
is
\[
\zeta_r(s,a;\boldsymbol{w})
=
\frac{\gamma_{-1}(j,a;\boldsymbol{w})}{s-j}
+
\sum_{k=0}^{\infty}
\gamma_k(j,a;\boldsymbol{w})(s-j)^k.
\]
Replacing $s$ by $s+j$, we have
\[
\zeta_r(s+j,a;\boldsymbol{w})
=
\sum_{k=-1}^{\infty}
\gamma_k(j,a;\boldsymbol{w})s^k.
\]
Comparing the coefficient of $s^k$ with
\eqref{eq:product-expansion}, we conclude that
\[
\gamma_k(j,a;\boldsymbol{w})
=
(-1)^j
\sum_{m=0}^{k+1}
d_{j,k-m}
\frac{1}{m!}
\frac{\partial^j}{\partial a^j}
\zeta_r^{(m)}(0,a;\boldsymbol{w}),
\]
for $k=-1,0,1,2,\ldots$.
This proves the theorem.
\end{proof}

\medskip

\section{Asymptotic behavior of the Laurent coefficients}
\label{sec:large-order}

Throughout this section, $r\geq2$ and the parameters
$a,w_1,\ldots,w_r\in\mathcal H_\phi$ are fixed, with the logarithm
determination specified in the introduction.
All limits are taken as the coefficient order tends to infinity,
not as a parameter tends to infinity.
Theorem~\ref{th:all-poles-coefficients} gives the basic coefficient
formula for this analysis. Unlike the first two main results, it uses
only the meromorphic continuation and the residues: removing all
principal parts reduces the problem to Cauchy estimates for an entire
function.

For $1\leq q\leq r$, write
\begin{equation}
R_q=R_q(a;\boldsymbol w)
:=\Res_{s=q}\zeta_r(s,a;\boldsymbol w)
=\frac{(-1)^{r-q}}{(q-1)!(r-q)!}
B_{r-q}^{(r)}(a;\boldsymbol w),
\label{eq:large-order-residues}
\end{equation}
where the last equality is \eqref{residue_intro}.
Define, initially away from $s=1,\ldots,r$,
\begin{equation}
E_r(s)=E_r(s,a;\boldsymbol w)
:=\zeta_r(s,a;\boldsymbol w)-\sum_{q=1}^r\frac{R_q}{s-q}.
\label{eq:entire-part}
\end{equation}
Every possible pole is simple and its principal part has been
subtracted, so all the remaining singularities are removable.
We use $E_r$ for the resulting entire function, to distinguish it
from $H_r=\zeta_r-J_r$ in Lemma~\ref{lem:regularized_difference},
which is not the same regularization.
This reasoning applies without change to the complex parameters
under consideration. If $R_j=0$, the coefficients at $s=j$ are
ordinary Taylor coefficients, and the statements below still apply.

\begin{theorem}\label{th:all-poles-coefficients}
For $j\in\{1,\ldots,r\}$ and every integer $k\geq0$,
\begin{equation}
\gamma_k(j,a;\boldsymbol w)
=(-1)^k\sum_{q=1}^{j-1}\frac{R_q}{(j-q)^{k+1}}
-\sum_{q=j+1}^r\frac{R_q}{(q-j)^{k+1}}
+\frac{E_r^{(k)}(j)}{k!}.
\label{eq:all-poles-coefficients}
\end{equation}
For every $R>0$, let
\[
M_{j,R}:=\max_{|s-j|=R}|E_r(s)|.
\]
Then
\begin{equation}
\left|
\gamma_k(j,a;\boldsymbol w)
-(-1)^k\sum_{q=1}^{j-1}\frac{R_q}{(j-q)^{k+1}}
+\sum_{q=j+1}^r\frac{R_q}{(q-j)^{k+1}}
\right|
\leq \frac{M_{j,R}}{R^k}.
\label{eq:all-poles-cauchy-bound}
\end{equation}
Empty sums are understood to be zero.
\end{theorem}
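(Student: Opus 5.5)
The plan is to start from the decomposition \eqref{eq:entire-part},
\[
\zeta_r(s,a;\boldsymbol w)=\sum_{q=1}^r\frac{R_q}{s-q}+E_r(s),
\]
and read off the Laurent coefficients at $s=j$ term by term.

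The first step is to confirm that $E_r$ is entire. By the meromorphic continuation recalled in the introduction, the only possible singularities of $\zeta_r$ are simple poles at $s=1,\ldots,r$, with residues $R_q$ given by \eqref{residue_intro}. Subtracting each principal part $R_q/(s-q)$ therefore leaves removable singularities only. The case $R_q=0$ is harmless, since the subtracted term is then zero.

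Next, fix $j$ and expand each summand in powers of $s-j$.

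\textbf{The term $q=j$.} It is exactly the principal part $\gamma_{-1}(j)/(s-j)$ and contributes nothing to $\gamma_k$ for $k\geq0$.

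\textbf{Terms with $q<j$.} Put $d=j-q>0$. Then
\[
\frac{1}{s-q}=\frac{1}{d+(s-j)}=\sum_{k\geq0}\frac{(-1)^k}{d^{k+1}}(s-j)^k,
\]
valid for $|s-j|<d$. This gives the coefficient $(-1)^kR_q/(j-q)^{k+1}$.

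\textbf{Terms with $q>j$.} Put $d=q-j$. Then
\[
\frac{1}{s-q}=-\frac{1}{d}\cdot\frac{1}{1-(s-j)/d}=-\sum_{k\geq0}\frac{(s-j)^k}{d^{k+1}},
\]
which gives $-R_q/(q-j)^{k+1}$.

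\textbf{The entire part.} $E_r$ contributes its Taylor coefficient $E_r^{(k)}(j)/k!$.

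All these expansions converge in the punctured disc $0<|s-j|<1$, where the Laurent expansion \eqref{Laurent_s=j} (or \eqref{Laurent_s=r} when $j=r$) is also valid. By uniqueness of Laurent coefficients, comparing coefficients yields \eqref{eq:all-poles-coefficients}.

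For the bound \eqref{eq:all-poles-cauchy-bound}, the difference on its left-hand side is, by \eqref{eq:all-poles-coefficients}, exactly $|E_r^{(k)}(j)/k!|$. Since $E_r$ is entire, Cauchy's integral formula applies on the circle $|s-j|=R$ for any $R>0$ and gives
\[
\left|\frac{E_r^{(k)}(j)}{k!}\right|\leq\frac{M_{j,R}}{R^k}.
\]
The maximum $M_{j,R}$ is finite because $E_r$ is continuous on this compact circle.

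I do not expect a real obstacle. The only point needing care is justifying that $E_r$ is entire, and this rests on the already-established fact that all poles of $\zeta_r$ are at most simple and lie in $\{1,\ldots,r\}$. One should also keep the sign conventions and the empty-sum cases $j=1$ and $j=r$ straight.
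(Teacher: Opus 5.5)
Your proposal is correct and follows the paper's proof essentially step for step. You expand each $R_q/(s-q)$ geometrically about $s=j$, compare the regular Laurent coefficients with the Taylor coefficients of the entire function $E_r$, and then apply Cauchy's estimate on the circle $|s-j|=R$.
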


\begin{proof}
Put $z=s-j$. For $q<j$, the geometric series gives
\[
\frac1{s-q}
=\frac1{j-q}\frac1{1+z/(j-q)}
=\sum_{k=0}^{\infty}\frac{(-1)^k z^k}{(j-q)^{k+1}},
\qquad |z|<j-q.
\]
For $q>j$, on the other hand,
\[
\frac1{s-q}
=-\frac1{q-j}\frac1{1-z/(q-j)}
=-\sum_{k=0}^{\infty}\frac{z^k}{(q-j)^{k+1}},
\qquad |z|<q-j.
\]
Thus the contributions from the right-hand poles have a minus sign
independent of $k$. Expand \eqref{eq:entire-part} near $s=j$,
retain $R_j/(s-j)$ as the principal part, and compare the regular
coefficients to obtain \eqref{eq:all-poles-coefficients}.
Since $E_r$ is entire, for any positively oriented circle of radius
$R>0$ centered at $j$ we have
\[
\frac{E_r^{(k)}(j)}{k!}
=\frac1{2\pi i}\int_{|s-j|=R}
\frac{E_r(s)}{(s-j)^{k+1}}\,ds.
\]
Bounding the integrand and using the circle length $2\pi R$
gives \eqref{eq:all-poles-cauchy-bound}.
\end{proof}

In particular, the entire-part coefficient is $O(R^{-k})$ for
every fixed $R>0$. Its constant may depend on $j$, $R$, the parameters,
and the logarithm determination, but not on $k$.
No uniformity as $R\to\infty$ is asserted, and $M_{j,R}$ is not
an explicit parameter-only bound.
The all-poles expansion must be distinguished from a truncation
to neighboring poles. Indeed, taking $R=2$ gives
\begin{equation}
\left|
\gamma_k(j,a;\boldsymbol w)
-(-1)^k\sum_{\substack{q<j\\j-q=1}}R_q
+\sum_{\substack{q>j\\q-j=1}}R_q
\right|
\leq 2^{-k}\left(M_{j,2}
+\sum_{\substack{1\leq q\leq r\\|q-j|\geq2}}
\frac{|R_q|}{|q-j|}\right).
\label{eq:neighbor-remainder}
\end{equation}
This follows from $|q-j|^{-k}\leq2^{-k}$ for the omitted poles.

\begin{corollary}\label{cor:highest-large-order}
At the highest pole, as $k\to\infty$,
\begin{align}
\gamma_k(r,a;\boldsymbol w)
&=(-1)^kR_{r-1}+O(2^{-k})\nonumber\\
&=(-1)^k\frac{w_1+\cdots+w_r-2a}
{2(r-2)!w_1\cdots w_r}+O(2^{-k}).
\label{eq:highest-large-order}
\end{align}
Consequently the following limit exists in $\mathbb C$:
\begin{equation}
\lim_{k\to\infty}(-1)^k\gamma_k(r,a;\boldsymbol w)
=\frac{w_1+\cdots+w_r-2a}
{2(r-2)!w_1\cdots w_r}.
\label{eq:highest-large-order-limit}
\end{equation}
\end{corollary}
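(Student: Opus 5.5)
The plan is to read off the statement from Theorem~\ref{th:all-poles-coefficients}, in its neighboring-pole form \eqref{eq:neighbor-remainder}, with $j=r$. Then only the residue $R_{r-1}$ has to be computed explicitly.

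\emph{Step 1 (the asymptotic).} With $j=r$ there are no poles to the right, so the sum over $q>j$ in \eqref{eq:neighbor-remainder} is empty. Since $r\geq2$, the index $q=r-1$ is a genuine pole index in $\{1,\ldots,r\}$. It is the unique left neighbor with $j-q=1$, and it contributes $(-1)^kR_{r-1}$. The right-hand side of \eqref{eq:neighbor-remainder} is $2^{-k}$ times a quantity independent of $k$, namely $M_{r,2}$ plus finitely many terms $|R_q|/|q-r|$. This gives $\gamma_k(r,a;\boldsymbol w)=(-1)^kR_{r-1}+O(2^{-k})$ with an implied constant depending only on the fixed parameters. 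If $R_{r-1}=0$ the statement remains true as written: the main term simply vanishes, as noted before Theorem~\ref{th:all-poles-coefficients}.

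\emph{Step 2 (computing $R_{r-1}$).} By \eqref{eq:large-order-residues} with $q=r-1$,
\[
R_{r-1}=-\frac{1}{(r-2)!}\,B_1^{(r)}(a;\boldsymbol w).
\]
To find $B_1^{(r)}$, rewrite the generating function \eqref{Bernoulli_intro} as
\[
e^{at}\prod_{i=1}^r\frac{t}{e^{w_it}-1},
\qquad
\frac{t}{e^{w_it}-1}=\frac1{w_i}\Bigl(1-\frac{w_it}{2}+O(t^2)\Bigr).
\]
Multiplying these expansions and extracting the coefficient of $t$ gives
\[
B_1^{(r)}(a;\boldsymbol w)=\frac{2a-(w_1+\cdots+w_r)}{2\,w_1\cdots w_r}.
\]
Hence
\[
R_{r-1}=\frac{w_1+\cdots+w_r-2a}{2(r-2)!\,w_1\cdots w_r},
\]
which is \eqref{eq:highest-large-order}.

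\emph{Step 3 (the limit).} Multiplying the asymptotic relation by $(-1)^k$ gives $(-1)^k\gamma_k(r,a;\boldsymbol w)=R_{r-1}+O(2^{-k})$. Since $2^{-k}\to0$, this yields \eqref{eq:highest-large-order-limit}.

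There is no real obstacle here. The only points needing care are the sign bookkeeping in \eqref{eq:large-order-residues}, where the factor is $(-1)^{r-q}=-1$ at $q=r-1$, and the first-order expansion of $t/(e^{wt}-1)$. I would double-check the latter against the case $r=2$, where $R_1=-B_1^{(2)}(a;w_1,w_2)$ is the classical residue of the double zeta function at $s=1$.
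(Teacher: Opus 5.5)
Your proposal is correct and follows essentially the same route as the paper. The paper also applies \eqref{eq:neighbor-remainder} at $j=r$ to isolate $q=r-1$. It then obtains $R_{r-1}=-B_1^{(r)}(a;\boldsymbol w)/(r-2)!$ from the first-order expansion of the generating function \eqref{Bernoulli_intro}; you factor this expansion as $e^{at}\prod_i t/(e^{w_it}-1)$, which is only a cosmetic difference.
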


\begin{proof}
At $j=r$ there is no pole to the right, and
\eqref{eq:all-poles-coefficients} reads
\[
\gamma_k(r,a;\boldsymbol w)
=(-1)^k\sum_{q=1}^{r-1}\frac{R_q}{(r-q)^{k+1}}
+\frac{E_r^{(k)}(r)}{k!}.
\]
Equation \eqref{eq:neighbor-remainder} isolates $q=r-1$.
To evaluate its residue, expand \eqref{Bernoulli_intro} to first order:
\[
\frac{t^r e^{at}}{\prod_{i=1}^r(e^{w_i t}-1)}
=\frac1{w_1\cdots w_r}
\left(1+\left(a-\frac12\sum_{i=1}^r w_i\right)t+O(t^2)\right).
\]
Hence
\[
B_1^{(r)}(a;\boldsymbol w)
=\frac{2a-w_1-\cdots-w_r}{2w_1\cdots w_r},
\qquad
R_{r-1}=-\frac{B_1^{(r)}(a;\boldsymbol w)}{(r-2)!}.
\]
Substitution proves both assertions, since the error tends to zero
in complex modulus.
\end{proof}

\begin{corollary}\label{cor:lowest-large-order}
At $s=1$ we have
\begin{equation}
\gamma_k(1,a;\boldsymbol w)=-R_2+O(2^{-k}),
\qquad
\lim_{k\to\infty}\gamma_k(1,a;\boldsymbol w)=-R_2.
\label{eq:lowest-large-order}
\end{equation}
The limit is taken in $\mathbb C$.
\end{corollary}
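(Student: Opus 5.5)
We specialize Theorem~\ref{th:all-poles-coefficients} to $j=1$, exactly as Corollary~\ref{cor:highest-large-order} does at $j=r$.

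At $j=1$ the left-hand sum in \eqref{eq:all-poles-coefficients} is empty, since there are no poles $q<1$. Hence
\[
\gamma_k(1,a;\boldsymbol w)
=-\sum_{q=2}^{r}\frac{R_q}{(q-1)^{k+1}}+\frac{E_r^{(k)}(1)}{k!}.
\]
This holds for every $k\geq0$, including the case $R_1=0$, because the $R_1$ term only enters the principal part.

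The $q=2$ term equals $-R_2$ exactly, with no dependence on $k$. This reflects the remark that right-hand poles contribute with a $k$-independent minus sign. Since $r\geq2$, the pole $q=2$ always lies in the range of summation.

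Next we apply \eqref{eq:neighbor-remainder} with $j=1$ and $R=2$. The remaining terms satisfy $q-1\geq2$, so each is bounded by $|R_q|\,2^{-k}/(q-1)$. The entire-part contribution is bounded via Cauchy's estimate by $M_{1,2}2^{-k}$. Together these give
\[
\bigl|\gamma_k(1,a;\boldsymbol w)+R_2\bigr|\leq 2^{-k}\Bigl(M_{1,2}+\sum_{q=3}^{r}\frac{|R_q|}{q-1}\Bigr).
\]
The constant on the right is finite and independent of $k$. Indeed, $E_r$ is entire, hence continuous on the circle $|s-1|=2$, and the sum over $q$ has finitely many terms. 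This proves $\gamma_k(1,a;\boldsymbol w)=-R_2+O(2^{-k})$.

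Letting $k\to\infty$, the error tends to zero in complex modulus, which gives the limit $-R_2$ in $\mathbb C$.

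No step is a real obstacle. The only care needed is to check that the geometric expansion of $1/(s-q)$ about $s=1$ has the $k$-independent sign, that the remainder constant does not depend on $k$, and that the statement remains valid when $R_2=0$, in which case the limit is simply $0$. If desired, one can also express $R_2$ explicitly through \eqref{eq:large-order-residues} as $(-1)^{r}B_{r-2}^{(r)}(a;\boldsymbol w)/(r-2)!$, but this is not needed for the claim.
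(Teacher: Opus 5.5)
Your proposal is correct and follows the paper's proof exactly. You specialize Theorem~\ref{th:all-poles-coefficients} to $j=1$, where the left-hand sum is empty and the $q=2$ term is the $k$-independent $-R_2$. You then bound the remaining poles and the entire part via \eqref{eq:neighbor-remainder} with $R=2$.
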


\begin{proof}
There is no pole to the left, so
\[
\gamma_k(1,a;\boldsymbol w)
=-\sum_{q=2}^r\frac{R_q}{(q-1)^{k+1}}
+\frac{E_r^{(k)}(1)}{k!}.
\]
Isolating $q=2$ and applying \eqref{eq:neighbor-remainder} proves
\eqref{eq:lowest-large-order}.
\end{proof}

\begin{corollary}\label{cor:intermediate-large-order}
For $r\geq3$ and $2\leq j\leq r-1$,
\begin{equation}
\gamma_k(j,a;\boldsymbol w)
=(-1)^kR_{j-1}-R_{j+1}+O(2^{-k}).
\label{eq:intermediate-large-order}
\end{equation}
In particular,
\begin{align}
\lim_{n\to\infty}\gamma_{2n}(j,a;\boldsymbol w)
&=R_{j-1}-R_{j+1},\nonumber\\
\lim_{n\to\infty}\gamma_{2n+1}(j,a;\boldsymbol w)
&=-R_{j-1}-R_{j+1}.
\label{eq:even-odd-limits}
\end{align}
Both limits exist in $\mathbb C$. The full sequence converges if
and only if $R_{j-1}=0$, in which case its limit is $-R_{j+1}$.
\end{corollary}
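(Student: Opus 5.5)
The plan is to specialize Theorem~\ref{th:all-poles-coefficients} together with the truncated bound \eqref{eq:neighbor-remainder} to an intermediate pole $2\leq j\leq r-1$. Such a pole exists only when $r\geq3$, and in that case both neighbors $q=j-1$ and $q=j+1$ lie in $\{1,\ldots,r\}$.

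\textbf{Neighboring terms.} In \eqref{eq:all-poles-coefficients}, the left sum contains the term $q=j-1$, which contributes $(-1)^kR_{j-1}/1^{k+1}=(-1)^kR_{j-1}$. The right sum contains the term $q=j+1$, which contributes $-R_{j+1}$.

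\textbf{Remainder.} All other poles satisfy $|q-j|\geq2$. By \eqref{eq:neighbor-remainder}, these omitted poles and the entire part $E_r^{(k)}(j)/k!$ are together bounded by
\[
2^{-k}\Bigl(M_{j,2}+\sum_{|q-j|\geq2}|R_q|/|q-j|\Bigr).
\]
This constant is independent of $k$. Hence
\[
\gamma_k(j,a;\boldsymbol w)=(-1)^kR_{j-1}-R_{j+1}+O(2^{-k}),
\]
which is \eqref{eq:intermediate-large-order}.

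\textbf{Parity limits.} For the even and odd subsequences, substitute $k=2n$ and $k=2n+1$. Since $O(2^{-k})\to0$ in complex modulus, the even subsequence tends to $R_{j-1}-R_{j+1}$ and the odd subsequence tends to $-R_{j-1}-R_{j+1}$. This gives \eqref{eq:even-odd-limits}, and both limits exist in $\mathbb C$.

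\textbf{Convergence of the full sequence.} The full sequence converges if and only if its two parity limits agree. That happens exactly when $R_{j-1}-R_{j+1}=-R_{j-1}-R_{j+1}$, that is, when $2R_{j-1}=0$, i.e.\ $R_{j-1}=0$. In that case the common limit is $-R_{j+1}$. Conversely, if the full sequence converges, both subsequential limits equal its limit, and hence $R_{j-1}=0$.

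The argument is therefore a direct specialization. The only point needing care is that \eqref{eq:neighbor-remainder} applies uniformly in $k$ with a fixed constant. This holds because $E_r$ is entire and $M_{j,2}$ depends only on $j$ and the fixed parameters. No genuine obstacle is expected.
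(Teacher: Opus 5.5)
Your proposal is correct and follows essentially the same route as the paper: specialize \eqref{eq:neighbor-remainder} at an intermediate $j$, where both neighbors $q=j\pm1$ are present, then read off the even and odd limits by substituting $k=2n$ and $k=2n+1$. Both arguments then observe that the two parity limits coincide exactly when $2R_{j-1}=0$, which by \eqref{eq:intermediate-large-order} is also sufficient for convergence of the full sequence.
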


\begin{proof}
Both neighboring indices are present in \eqref{eq:neighbor-remainder}.
Substituting $k=2n$ and $k=2n+1$ gives the two limits.
They coincide exactly when $2R_{j-1}=0$, which is also sufficient
for convergence by \eqref{eq:intermediate-large-order}.
\end{proof}

\begin{remark}\label{rem:vanishing-large-order}
The preceding formulae remain valid when a residue vanishes;
the corresponding point is then removable rather than a pole.
Thus ``nearest poles'' means the nearest poles with nonzero residues.
More generally, retaining in \eqref{eq:all-poles-coefficients}
all contributions with $1\leq|q-j|\leq D$, for a fixed integer
$D\geq1$, leaves an $O((D+1)^{-k})$ remainder: use Cauchy's
estimate at radius $D+1$ and bound every omitted pole by its distance.
If no nonzero pole contribution is omitted, the remainder is
$O(R^{-k})$ for every fixed $R>0$.
For example, if $r\geq3$ and $R_{r-1}=0$, then
\[
\gamma_k(r,a;\boldsymbol w)
=(-1)^k\frac{R_{r-2}}{2^{k+1}}+O(3^{-k}).
\]
This is a nonzero leading term only if $R_{r-2}\neq0$.
If all other residues vanish, only the entire-part coefficient remains.
At an intermediate point, equidistant contributions may also cancel
on one parity: their sum at distance $d$ is
$\bigl((-1)^kR_{j-d}-R_{j+d}\bigr)/d^{k+1}$ whenever both indices
are present. If this vanishes for even or odd $k$, the next nonzero
contribution on that subsequence must be considered.
\end{remark}

\begin{remark}\label{rem:double-large-order}
For $r=2$, the only residues are
\[
R_1=\frac{w_1+w_2-2a}{2w_1w_2},\qquad
R_2=\frac1{w_1w_2}.
\]
Thus, for every fixed $R>0$,
\begin{align*}
\gamma_k(2,a;w_1,w_2)
&=(-1)^k\frac{w_1+w_2-2a}{2w_1w_2}+O(R^{-k}),\\
\gamma_k(1,a;w_1,w_2)
&=-\frac1{w_1w_2}+O(R^{-k}).
\end{align*}
Taking any $R>1$ recovers the two limits, in particular
$\gamma_k(1,a;w_1,w_2)\to-1/(w_1w_2)$.
These are the asymptotic forms considered in
\cite{Miyagawa2025_LEBDZ}*{Theorems~2.6 and~2.7}, with
$(a,w_1,w_2)=(\alpha,v,w)$ in that paper.
Here the remainders use $M_{j,R}$, rather than a particular Mellin
integral bound, and allow arbitrary fixed radii.
There are no omitted poles in this double-zeta case, which explains
why the stronger remainder holds after retaining just one pole.
\end{remark}

\begin{remark}\label{rem:triple-large-order}
For $r=3$ and $\boldsymbol w=(1,1,1)$, the generating function gives
\[
R_1=\frac{a^2-3a+2}{2},\qquad
R_2=\frac32-a,\qquad R_3=\frac12.
\]
For $a>0$, this can also be checked independently by counting triples
with a given sum:
\[
\zeta_3(s,a;1,1,1)
=\frac12\zeta_H(s-2,a)
+\left(\frac32-a\right)\zeta_H(s-1,a)
+\frac{a^2-3a+2}{2}\zeta_H(s,a),
\]
using compatible logarithms. Consequently, for every fixed $R>0$,
\begin{align*}
\gamma_k(3,a;1,1,1)
&=(-1)^k\left(R_2+\frac{R_1}{2^{k+1}}\right)+O(R^{-k}),\\
\gamma_k(1,a;1,1,1)
&=-R_2-\frac{R_3}{2^{k+1}}+O(R^{-k}),\\
\gamma_k(2,a;1,1,1)
&=(-1)^kR_1-R_3+O(R^{-k}).
\end{align*}
For $a=3/2$, $R_2=0$ but $R_1=-1/8$, so the highest-pole
coefficient has the nonzero contribution
$(-1)^{k+1}/2^{k+4}$ from distance two.
For $a=1$, $R_1=0$, and the even and odd limits at $j=2$
both equal $-1/2$.
These cases illustrate why a zero nearest residue must not be
interpreted as the absence of every more distant contribution.
\end{remark}

\section*{Acknowledgments}
At the 2026 Autumn Meeting of the Mathematical Society of Japan, held at Kobe University, I received valuable comments from Prof. Genki Shibukawa, which contributed to the further development of this paper. I would like to take this opportunity to express my deep gratitude.
I would also like to thank the members of my laboratory and the Kansai Multiple Zeta Study Group.

\bibliographystyle{amsalpha}
\bibliography{References} 

@Article{Barnes1899,
 Author = {Barnes, E. W.},
 Title = {The genesis of the double gamma functions.},
 FJournal = {Proceedings of the London Mathematical Society},
 Journal = {Proc. Lond. Math. Soc.},
 ISSN = {0024-6115},
 Volume = {31},
 Pages = {358--381},
 Year = {1899},
 Language = {English},
 DOI = {10.1112/plms/s1-31.1.358},
 URL = {zenodo.org/record/1447742},
 zbMATH = {2668383},
 JFM = {30.0389.03}
}

@Article{Barnes1901,
 Author = {Barnes, E. W.},
 Title = {The theory of the double gamma function.},
 FJournal = {Philosophical Transactions of the Royal Society of London, Series A},
 Journal = {Philos. Trans. R. Soc. Lond., Ser. A, Contain. Pap. Math. Phys. Character},
 Volume = {196},
 Pages = {265--387},
 Year = {1901},
 Language = {English},
 DOI = {10.1098/rsta.1901.0006},
 zbMATH = {2662859},
 JFM = {32.0442.02}
}

@Misc{Barnes1904,
 Author = {Barnes, E. W.},
 Title = {On the theory of the multiple {Gamma} function.},
 Year = {1904},
 Language = {English},
 HowPublished = {Cambr. {Trans}. 19, 374-425 (1904).},
 zbMATH = {2653713},
 JFM = {35.0462.01}
}

@article{Berndt1972,
 author = {Berndt, Bruce C.},
 title = {On the {Hurwitz} zeta-function},
 fjournal = {Rocky Mountain Journal of Mathematics},
 journal = {Rocky Mt. J. Math.},
 issn = {0035-7596},
 volume = {2},
 pages = {151--157},
 year = {1972},
 language = {English},
 doi = {10.1216/RMJ-1972-2-1-151},
 zbMATH = {3363666},
 Zbl = {0229.10023}
}

@book{Finch2003,
 author = {Finch, Steven R.},
 title = {Mathematical constants},
 fseries = {Encyclopedia of Mathematics and Its Applications},
 series = {Encycl. Math. Appl.},
 issn = {0953-4806},
 volume = {94},
 isbn = {0-521-81805-2},
 year = {2003},
 publisher = {Cambridge: Cambridge University Press},
 language = {English},
 zbMATH = {2018401},
 Zbl = {1054.00001}
}

@article{MatsumotoOnozukaWakabayashi2020,
 author = {Matsumoto, Kohji and Onozuka, Tomokazu and Wakabayashi, Isao},
 title = {Laurent series expansions of multiple zeta-functions of {Euler}-{Zagier} type at integer points},
 fjournal = {Mathematische Zeitschrift},
 journal = {Math. Z.},
 issn = {0025-5874},
 volume = {295},
 number = {1-2},
 pages = {623--642},
 year = {2020},
 language = {English},
 doi = {10.1007/s00209-019-02337-2},
 zbMATH = {7203131},
 Zbl = {1445.11089}
}

@misc{Miyagawa2025_LEBDZ,
 author = {Takashi Miyagawa},
 title = {On the {Laurent} series expansions of the {Barnes} double zeta function},
 year = {2026},
 howpublished = {Preprint, {arXiv}:2507.18152 [math.{NT}] (2026)},
 url = {https://arxiv.org/abs/2507.18152},
 arXiv = {arXiv:2507.18152}
}

@article{Noronha2017,
 author = {Noronha, J. M. B.},
 title = {Representations for the derivative at zero and finite parts of the {Barnes} zeta function},
 journal = {Integral Transforms Spec. Funct.},
 volume = {28},
 number = {6},
 pages = {423--442},
 year = {2017},
 doi = {10.1080/10652469.2017.1304937}
}

@article{Ruijsenaars2000,
 author = {Ruijsenaars, S. N. M.},
 title = {On {Barnes}' multiple zeta and gamma functions},
 fjournal = {Advances in Mathematics},
 journal = {Adv. Math.},
 issn = {0001-8708},
 volume = {156},
 number = {1},
 pages = {107--132},
 year = {2000},
 language = {English},
 doi = {10.1006/aima.2000.1946},
 zbMATH = {1579914},
 Zbl = {0966.33013}
}

@article{Spreafico2009,
 author = {Spreafico, M.},
 title = {On the {Barnes} double zeta and gamma functions},
 fjournal = {Journal of Number Theory},
 journal = {J. Number Theory},
 issn = {0022-314X},
 volume = {129},
 number = {9},
 pages = {2035--2063},
 year = {2009},
 language = {English},
 doi = {10.1016/j.jnt.2009.03.005},
 zbMATH = {5575276},
 Zbl = {1185.11055}
}

@article{ZhangWilliams1994,
 author = {Zhang, Nanyue and Williams, Kenneth S.},
 title = {Some results on the generalized {Stieltjes} constants},
 fjournal = {Analysis},
 journal = {Analysis},
 issn = {0174-4747},
 volume = {14},
 number = {2-3},
 pages = {147--162},
 year = {1994},
 language = {English},
 zbMATH = {687756},
 Zbl = {0808.11054}
}

\end{document}